\newcommand{\Cay}{\mathrm{Cay}}
\newtheorem{theorem}{Theorem}[section]
\newtheorem{lemma}[theorem]{Lemma}
\theoremstyle{definition}
\newtheorem{problem}{Problem}
\numberwithin{equation}{section} 
\def\qed{\hfill$\Box$\vspace{12pt}}
\long\def\delete#1{}
\begin{document}
\title{Enumeration of dicirculant digraphs}
\author{Jing Wang$^{a,b,c}$, ~Ligong Wang$^{a,b,c}$$^,$\thanks{Supported by the National Natural Science Foundation of China (No. 12271439).},~Xiaogang Liu$^{a,b,c}$$^,$\thanks{Supported by the National Natural Science Foundation of China (No. 12371358) and the Guangdong Basic and Applied Basic Research Foundation (No. 2023A1515010986).}~$^,$\thanks{Corresponding author. Email addresses: wj66@mail.nwpu.edu.cn, lgwangmath@163.com, xiaogliu@nwpu.edu.cn}
\\[2mm]
{\small $^a$School of Mathematics and Statistics,}\\[-0.8ex]
{\small Northwestern Polytechnical University, Xi'an, Shaanxi 710072, P.R.~China}\\
{\small $^b$Research \& Development Institute of Northwestern Polytechnical University in Shenzhen,}\\[-0.8ex]
{\small Shenzhen, Guangdong 518063, P.R. China}\\
{\small $^c$Xi'an-Budapest Joint Research Center for Combinatorics,}\\[-0.8ex]
{\small Northwestern Polytechnical University, Xi'an, Shaanxi 710129, P.R. China}\\
}
\date{}

\date{}

\openup 0.5\jot
\maketitle
\begin{abstract}
Let $T_{4p}=\langle a,b\mid a^{2p}=1,a^p=b^2, b^{-1}ab=a^{-1}\rangle$ be the dicyclic group of order $4p$. A Cayley digraph over $T_{4p}$ is called a dicirculant digraph. In this paper, we calculate the number of (connected) dicirculant digraphs of order $4p$ ($p$ prime) up to isomorphism by using the P{\'o}lya Enumeration Theorem. Moreover, we get the number of (connected) dicirculant digraphs of order $4p$ ($p$ prime) and out-degree $k$ for every $k$.

\emph{Keywords:} Cayley digraph; dicyclic group; Cayley isomorphism.

\emph{Mathematics Subject Classification (2010):} 05C25
\end{abstract}

\section{Introduction}
Let $G$ be a group and $S$ a subset of $G$. The \emph{Cayley digraph} $\Cay(G,S)$ is a digraph whose vertex set is $G$ and arc set is $\{\{g,sg\}\mid g\in G, s\in S\}$. If $S=S^{-1}=\left\{s^{-1}\mid s\in S\right\}$ (inverse-closed), then $\Cay(G,S)$ is an undirected graph, which is also called \emph{Cayley graph}.
A Cayley digraph (respectively, Cayley graph) is connected if and only if $S$ generates $G$.

Enumeration of isomorphic Cayley digraphs is an interesting yet difficult problem. Numerous researchers endeavor to investigate this problem. In 1967,
Turner \cite{Turner1967} found that the P{\'o}lya Enumeration Theorem is a suitable tool for determining the number of Cayley digraphs. By employing the P{\'o}lya Enumeration Theorem, Mishna \cite{Mishna2000} obtained the number of Cayley digraphs (Cayley graphs) over cyclic groups, which was also studied in \cite{AlspachM2002,LiskovetsP2000}. Recently, Huang and Huang \cite{HuangH2019,HuangHL2017} counted Cayley digraphs (Cayley graphs) over  diherdal groups $D_{2p}$~($p$ prime). For enumerating the isomorphism classes of some edge-transitive but not arc-transitive Cayley digraphs, we refer the readers to \cite{LiSim2001,Wang1994,Xu1992}.

Let
$$T_{4p}=\langle a,b\mid a^{2p}=1,a^p=b^2, b^{-1}ab=a^{-1}\rangle$$
be a dicyclic group, which is also called a generalized quaternion group. A Cayley digraph over cyclic group is called a circulant digraph and the one over dicyclic group is called a dicirculant digraph. In 2021, Wang, Liu and Feng \cite{WangLIU2021} studied the number of connected dicirculant graphs of order $4p$~($p$ prime). However, the enumeration of dicirculant digraphs of order $4p$~($p$ prime) remains an unsolved problem.

In this paper, we will solve this problem completely. We first calculate the number of dicirculant digraphs of order $4p$~($p$ prime) by using the P{\'o}lya Enumeration Theorem. And we get the number of connected dicirculant digraphs by deleting the  number of circulants and other disconnected graphs. We also count the (connected) dicirculant digraphs of order $4p$~($p$ prime) and  out-degree $k$ for each $k$.
Finally, we list the number of connected dicirculant digraphs of order $4p$~($2\leq p\leq 11$) and out-degree $k~(0\leq k\leq 4p-1)$.

\section{Preliminaries}
In this section, we introduce the P{\'o}lya Enumeration Theorem and some useful results.

Let $G$ be a group and $X$ be a set. An \emph{action} of $G$ on $X$, denoted by $(G,X)$, is a map $G \times X\rightarrow X$ with $(g,x)\rightarrow gx$ such that
\begin{itemize}
\item[\rm $(i)$] $ex=x$ for all $x \in X$, where $e$ is the identity element of $G$,
\item[\rm $(ii)$] $(g_1g_2)x=g_1(g_2x)$ for all $g_1,g_2\in G$ and all $x\in X$.
\end{itemize}
If there is an action of $G$ on $X$, then we say $G$ \emph{acts} on $X$.
Let $H$ be a permutation group on $X~(|X|=n)$. Then $H$ acts on $X$ naturally by defining $hx=h(x)$. Let $b_k(h)$ be the number of cycles of length $k$ in the standard cycle decomposition of $h\in H$, where $k=1,2, \ldots, n$. The cycle type of $h$ is defined as
$$\mathcal{T}(h)=\left(b_1(h), b_2(h), \ldots, b_n(h)\right).$$
Clearly,
$$b_1(h)+2 b_2(h)+\cdots+n b_n(h)=n.$$
The \emph{cycle index} $\mathcal{I}(H, X)$ of the permutation group $H$ acting on $X$ is defined as the following polynomial
\begin{equation}\label{enequation1}
\mathcal{I}(H, X)=P_H\left(x_1, x_2, \ldots, x_n\right)=\frac{1}{|H|} \sum_{h \in H} x_1^{b_1(h)} x_2^{b_2(h)} \cdots x_n^{b_n(h)},
\end{equation}
where $x_1, x_2, \ldots, x_n$ are indeterminates.
Let $A$ and $C$ be finite sets. Denote by
$$
C^A=\{f \mid f: A \rightarrow C\}
$$
the set of all maps from $A$ to $C$. Let $G$ be a permutation group acting on $A$. Then we obtain a group action $\left(H, C^A\right)$ by:
$$
h f=f \circ h^{-1} \text { for every } h \in H \text { and } f \in C^A,
$$
where $f \circ h^{-1}$ denotes the composite of two maps $f$ and $h^{-1}$. Under the group action $\left(H, C^A\right)$, we say two maps in $C^A$ are \emph{$H$-equivalent} if they belong to the same orbit. The P{\'o}lya Enumeration Theorem gives the number of orbits of the group action $\left(H, C^A\right)$.

\begin{lemma}\emph{(P{\'o}lya Enumeration Theorem, see \cite[Chap. 2]{HararyP1973})} \label{enlemma8}
Let $A$ and $C$ be finite sets with $|A|=n$ and $|C|=m$. Let $H$ be a permutation group acting on $A$. Denote by $\mathcal{F}$ the set of all orbits of the group action $\left(H, C^A\right)$. Then
$$
|\mathcal{F}|=P_H(m, m, \ldots, m),
$$
where $P_H\left(x_1, x_2, \ldots, x_n\right)$ is the cycle index of $(H, A)$ defined in (\ref{enequation1}).
\end{lemma}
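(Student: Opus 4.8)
The plan is to deduce the statement from the Cauchy--Frobenius (Burnside) orbit-counting lemma, which asserts that for a finite group acting on a finite set the number of orbits equals the average number of fixed points. First I would apply this lemma to the given action $(H, C^A)$, obtaining
$$
|\mathcal{F}| = \frac{1}{|H|}\sum_{h\in H}\bigl|\{f \in C^A : hf = f\}\bigr|.
$$
This reduces the problem to computing, for each fixed $h\in H$, the number of maps $f\colon A\to C$ that $h$ leaves invariant.

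Next I would identify those invariant maps explicitly. Since the action is given by $hf = f\circ h^{-1}$, the condition $hf=f$ is equivalent to $f(h(a))=f(a)$ for all $a\in A$, i.e.\ to $f$ being constant on every orbit of the cyclic group $\langle h\rangle$ acting on $A$. These orbits are precisely the cycles occurring in the standard cycle decomposition of $h$, and their total number is $b_1(h)+b_2(h)+\cdots+b_n(h)$. A fixed map is then specified by freely choosing one of the $m=|C|$ values on each such cycle, and conversely every such choice produces a genuinely $h$-invariant map, so that
$$
\bigl|\{f \in C^A : hf = f\}\bigr| = m^{\,b_1(h)+b_2(h)+\cdots+b_n(h)}.
$$
Substituting this into the orbit count and splitting the exponent into a product yields
$$
|\mathcal{F}| = \frac{1}{|H|}\sum_{h\in H} m^{b_1(h)}\,m^{b_2(h)}\cdots m^{b_n(h)},
$$
which is exactly $P_H(m,m,\ldots,m)$ by the definition (\ref{enequation1}) of the cycle index.

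The argument is essentially routine once the orbit-counting lemma is available; the only point demanding real care is the fixed-point characterization, namely verifying in both directions that the $h$-invariant maps are precisely those constant on the cycles of $h$ and that the cycles coincide with the $\langle h\rangle$-orbits on $A$. If the orbit-counting lemma is not to be assumed, then the main preliminary obstacle becomes establishing it — most cleanly by double counting the set $\{(h,f): hf=f\}$ and invoking the orbit--stabilizer relation — after which the remainder of the proof proceeds as above.
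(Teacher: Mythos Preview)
Your argument is correct and is precisely the standard derivation of the P\'olya Enumeration Theorem from the Cauchy--Frobenius lemma. Note, however, that the paper does not give its own proof of this lemma: it is stated with a citation to \cite[Chap.~2]{HararyP1973} and used as a black box, so there is no in-paper proof to compare against. Your write-up would serve perfectly well as the omitted proof.
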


Let $H$ be a permutation group acting on $A$. Two $k$-subsets $S$ and $T$ of $A$ are said to be \emph{$H$-equivalent} if there exists some $h \in H$ such that $g(S)=T$. The following result enumerates the $H$-equivalent $k$-subsets of $A$.

\begin{lemma}\emph{(see \cite[Chap. 2]{HararyP1973})} \label{enlemma10}
Let $A$ be a finite set with $|A|=n$, and let $H$ be a permutation group acting on $A$. Then the number of $H$-equivalent classes of $k$-subsets of $A$ is equal to the coefficient of $x^k$ in the polynomial $P_H(1+x, 1+$ $\left.x^2, \ldots, 1+x^n\right)$, where $P_H\left(x_1, x_2, \ldots, x_n\right)$ is the cycle index of $(H, A)$ defined in (\ref{enequation1}).
\end{lemma}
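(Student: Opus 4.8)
The plan is to deduce this refined count from the unweighted Pólya Enumeration Theorem of Lemma~\ref{enlemma8} by upgrading the plain orbit count to a \emph{weighted} orbit count that records the size of each subset. First I would identify every subset $S\subseteq A$ with its characteristic function $f_S\in C^A$, where $C=\{0,1\}$ and $f_S(a)=1$ iff $a\in S$. This is a bijection between subsets of $A$ and $C^A$ sending the $k$-subsets to precisely those $f$ with $|f^{-1}(1)|=k$, and it is equivariant for the two $H$-actions, since $h(f_S)=f_{h(S)}$. Hence the $H$-equivalence classes of $k$-subsets are in bijection with the orbits of $(H,C^A)$ consisting of functions $f$ with $|f^{-1}(1)|=k$.

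Next I would attach to each $f\in C^A$ the formal weight $w(f)=x^{|f^{-1}(1)|}$. Because the $H$-action merely permutes the elements of $A$, it preserves $|f^{-1}(1)|$, so $w$ is constant on each orbit and the generating polynomial $\Phi(x)=\sum_{O}w(O)$, summed over the orbits $O$, has the number $N_k$ of $H$-classes of $k$-subsets as its coefficient of $x^k$. The task thereby reduces to proving the identity $\Phi(x)=P_H(1+x,1+x^2,\dots,1+x^n)$; comparing the coefficients of $x^k$ on both sides then completes the argument.

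To evaluate $\Phi(x)$ I would invoke the weighted form of the Cauchy--Frobenius (Burnside) lemma that underlies Lemma~\ref{enlemma8}: for a weight constant on orbits one has $\sum_{O}w(O)=\tfrac{1}{|H|}\sum_{h\in H}\sum_{f:\,hf=f}w(f)$. Under the action $hf=f\circ h^{-1}$, a function $f$ is fixed by $h$ exactly when it is constant on every cycle of $h$ acting on $A$; assigning the value $1$ to a cycle of length $i$ adds $i$ to $|f^{-1}(1)|$ and so contributes a factor $x^i$, whereas assigning $0$ contributes a factor $1$. Summing over the two choices on each of the $b_i(h)$ cycles of length $i$ gives $\sum_{f:\,hf=f}w(f)=\prod_{i=1}^{n}(1+x^i)^{b_i(h)}$. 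Averaging over $h\in H$ and matching against the definition of the cycle index in~(\ref{enequation1}) yields $\Phi(x)=P_H(1+x,1+x^2,\dots,1+x^n)$, as required.

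The only genuine obstacle is that Lemma~\ref{enlemma8} is stated in its unweighted form (all variables set equal to $m=|C|$), so it cannot by itself track the subset size; the entire substance of the proof lies in the passage to the weighted count above. Everything else is routine bookkeeping: verifying the equivariance of the characteristic-function bijection, checking that the functions fixed by $h$ are exactly those constant on its cycles, and recognizing the resulting product as the monomial $x_1^{b_1(h)}\cdots x_n^{b_n(h)}$ of~(\ref{enequation1}) under the substitution $x_i\mapsto 1+x^i$. Equivalently, one may simply cite the weighted Pólya theorem from the same source~\cite[Chap.~2]{HararyP1973} with figure-counting series $1+x$, whose $i$-th power sum is $1+x^i$.
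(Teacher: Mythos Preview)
Your proof is correct and is precisely the standard argument. Note, however, that the paper does not supply its own proof of this lemma: it is stated with a bare citation to Harary and Palmer~\cite[Chap.~2]{HararyP1973} and used as a black box. What you have written is essentially the proof one finds in that reference (the weighted P\'olya theorem with figure-counting series $1+x$), so there is nothing to compare against beyond observing that you have filled in what the paper deliberately omits.
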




During the process of employing the P{\'o}lya Enumeration Theorem to investigate the number of Cayley digraphs, the graph and the related group must respectively possess the property known as DCI-graph and DCI-group. The Cayley digraph $\Cay(G, S)$ is called a \emph{DCI-graph} of $G$ if, for any Cayley digraph $\Cay(G, T)$, whenever $\Cay(G, S) \cong \Cay(G, T)$ we have $\alpha(S)=T$ for some $\alpha \in \mathrm{Aut}(G)$, where $\mathrm{Aut}(G)$ denotes the automorphism group of $G$. A group $G$ is called a \emph{DCI-group} if all Cayley digraphs on $G$ are DCI-graphs. Many DCI-graphs and DCI-groups have been investigated, and the readers can refer to \cite{Adam1967,ElspasT1970,Babai1977,Dobson1995,Dobson1998,Dobson2014,DobsonMS2015,FengK2018,HuangQX2003, HuangM2000,KovacsR2022,LiLuP2007,Morris1999,Muzychuk1995,Muzychuk1997, Somlai2015}. Recently,  Muzychuk \cite{Muzychukppt} and Xie et al. \cite{XieFX24} found that the dicyclic group $T_{4p}$~($p$ prime) is a DCI-group.

\begin{lemma}\emph{(see \cite{Muzychukppt, XieFX24})}\label{enlemma3}
Let $p$ be a prime. Then $T_{4p}$ is a DCI-group.
\end{lemma}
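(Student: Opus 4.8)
\medskip
\noindent\textbf{Proof strategy.}
The plan is to prove the equivalent permutation-group statement furnished by Babai's criterion \cite{Babai1977}: a finite group $G$ is a DCI-group if and only if, for every connection set $S$, any two regular subgroups of $\mathrm{Aut}(\Cay(G,S))$ that are isomorphic to $G$ are conjugate inside $\mathrm{Aut}(\Cay(G,S))$. It is cleanest to recast this in the language of Schur rings (S-rings) over $G=T_{4p}$: the transitivity module generated by $S$ is an S-ring, every S-ring arises this way, and $\Cay(G,S)$ is a CI-digraph precisely when the S-ring it generates is a CI-S-ring. Thus the whole lemma reduces to the task of classifying all S-rings $\mathcal{A}$ over $T_{4p}$ and checking, for each, that the regular copies of $T_{4p}$ in the automorphism group $\mathrm{Aut}(\mathcal{A})$ of the associated Cayley scheme form a single conjugacy class.

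The classification is where the structure of $T_{4p}$ is exploited. Since $p$ is an odd prime (the case $p=2$, where $T_{8}\cong Q_8$, is handled separately as a known CI-group), the subgroup $\langle a^2\rangle\cong\mathbb{Z}_p$ is the normal Sylow $p$-subgroup, the Sylow $2$-subgroup $\langle b\rangle\cong\mathbb{Z}_4$ is cyclic, and $T_{4p}\cong\mathbb{Z}_p\rtimes\mathbb{Z}_4$ with $\mathbb{Z}_4$ acting through its order-two quotient by inversion. I would invoke the Sylow theory of S-rings (in the style of Muzychuk and Dobson, cf.\ \cite{Dobson1995,Dobson1998,Muzychuk1997}) to control how an arbitrary $\mathcal{A}$ interacts with the normal subgroups $\langle a^2\rangle$ and $\langle a\rangle$: one first decides whether these are $\mathcal{A}$-subgroups, then passes to the quotient S-ring over $\mathbb{Z}_4$ and the restricted S-ring over $\mathbb{Z}_p$, and reconstructs $\mathcal{A}$ from this data. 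Because the orders $p$ and $4$ are coprime, the coprime decomposition results for S-rings apply and drastically limit the possibilities.

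With the classification in hand I would run a case analysis on the decomposition type of $\mathcal{A}$. The ``building-block'' S-rings---trivial and discrete S-rings, the cyclotomic (rational) S-rings supported on the cyclic parts, and the tensor and (generalized) wreath products built from them---are verified to be CI by reducing to the base cases $\mathbb{Z}_p$, $\mathbb{Z}_4$, $\mathbb{Z}_{2p}$ and $Q_8$, all of which are known DCI-groups, together with the fact that the CI-property is preserved under tensor products of S-rings over coprime factors and under wedge/wreath products satisfying the appropriate gluing conditions. In each such case the conjugacy of regular $T_{4p}$-subgroups follows from the corresponding conjugacy over the factors.

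The main obstacle is the residual family of indecomposable S-rings that genuinely mix the $2$-part and the $p$-part and are not captured by a tensor or wreath decomposition; this is exactly where the classical non-CI phenomena (the failures for $\mathbb{Z}_8$ and for $\mathbb{Z}_p\times\mathbb{Z}_p$) would have to surface if they could, so the crux is to show that no such obstruction embeds here. For these I would compute $\mathrm{Aut}(\mathcal{A})$ explicitly, pin down its Sylow $2$- and Sylow $p$-subgroups, and then argue that any regular subgroup isomorphic to $T_{4p}$ can be conjugated into a fixed standard copy: a Sylow-conjugacy step places the $p$-part correctly (using that the Sylow $p$-subgroup is normal and unique up to conjugacy), and a normalizer computation then aligns the cyclic $\mathbb{Z}_4$-part together with the inverting action. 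Carrying out this last alignment---controlling the normalizer of the normal $p$-part inside $\mathrm{Aut}(\mathcal{A})$ tightly enough to force a single fusion class---is the delicate, computation-heavy heart of the argument, and is what ultimately yields that $T_{4p}$ is a DCI-group.
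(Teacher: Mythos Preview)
The paper does not prove this lemma at all: it is quoted verbatim from the cited references \cite{Muzychukppt, XieFX24} and used as a black box, so there is no ``paper's own proof'' to compare your proposal against.

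As a stand-alone strategy, what you outline is broadly the right kind of argument and is in fact close in spirit to Muzychuk's methodology: reduce to Babai's conjugacy criterion, pass to Schur rings, classify the S-rings over $T_{4p}$ using the coprime splitting $T_{4p}\cong\mathbb{Z}_p\rtimes\mathbb{Z}_4$, and verify the CI-property case by case via tensor/wreath reductions and an explicit treatment of the indecomposable residue. Your structural observations (normal Sylow $p$-subgroup $\langle a^2\rangle$, cyclic Sylow $2$-subgroup $\langle b\rangle$, action through the order-two quotient) are correct and are exactly what drives the classification. That said, what you have written is a plan rather than a proof: the S-ring classification over $T_{4p}$ is not carried out, the preservation of the CI-property under the relevant tensor and generalized wreath constructions is asserted rather than established, and the ``computation-heavy heart'' you flag---controlling the normalizer of the $p$-part inside $\mathrm{Aut}(\mathcal{A})$ for the genuinely indecomposable S-rings---is precisely where the work lies and is left entirely open. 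For the purposes of this paper none of that is needed, since the result is imported; but if you intend this as an independent proof, those steps must actually be executed.
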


Lemma \ref{enlemma3} naturally leads to the following result.
\begin{lemma}\label{enlemma4}
Let $p$ be a prime. Then
the two  dicirculant digraphs $\operatorname{Cay}\left(T_{4 p}, S\right)$ and $\operatorname{Cay}\left(T_{4 p}, T\right)$ are isomorphic if and only if there exists some $\alpha \in \operatorname{Aut}\left(T_{4 p}\right)$ such that $\alpha(S)=T$.
\end{lemma}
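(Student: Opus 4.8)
The plan is to derive Lemma~\ref{enlemma4} directly from the DCI-property established in Lemma~\ref{enlemma3}, since the statement is essentially a restatement of what it means for $T_{4p}$ to be a DCI-group, specialized to a concrete isomorphism criterion. First I would recall the definitions carefully. By Lemma~\ref{enlemma3}, $T_{4p}$ is a DCI-group, which by definition means that every Cayley digraph on $T_{4p}$ is a DCI-graph. So for the two connection sets $S$ and $T$ in question, the digraph $\Cay(T_{4p},S)$ is a DCI-graph.

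Next I would prove the two directions. The backward direction is the easy half: if $\alpha(S)=T$ for some $\alpha\in\operatorname{Aut}(T_{4p})$, then $\alpha$ itself is a bijection $T_{4p}\to T_{4p}$ that carries the arc set of $\Cay(T_{4p},S)$ onto that of $\Cay(T_{4p},T)$. Concretely, an arc $\{g,sg\}$ with $s\in S$ is mapped to $\{\alpha(g),\alpha(s)\alpha(g)\}$, and since $\alpha(s)\in\alpha(S)=T$ this is an arc of $\Cay(T_{4p},T)$; because $\alpha$ is a bijection the correspondence is onto, so $\alpha$ is a digraph isomorphism. This standard fact holds for any group and does not use the DCI-property at all. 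The forward direction is where the DCI-property is invoked: assuming $\Cay(T_{4p},S)\cong\Cay(T_{4p},T)$, apply the definition of a DCI-graph to the isomorphic pair $\Cay(T_{4p},S)$ and $\Cay(T_{4p},T)$ to conclude that there exists $\alpha\in\operatorname{Aut}(T_{4p})$ with $\alpha(S)=T$.

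There is essentially no serious obstacle here, since both halves follow from the definitions once Lemma~\ref{enlemma3} is in hand; the only point requiring mild care is making sure the definition of DCI-graph is applied to $\Cay(T_{4p},S)$ (which is the DCI-graph, as guaranteed for \emph{all} Cayley digraphs on a DCI-group) with $\Cay(T_{4p},T)$ playing the role of the arbitrary isomorphic Cayley digraph. I would state the forward direction as the genuine content and note that the backward direction is the routine converse. Altogether the proof is just the observation that ``$T_{4p}$ is a DCI-group'' unwinds precisely to the claimed equivalence, so the writeup should be brief, with the combinatorial verification of the backward direction being the only computation worth displaying explicitly.
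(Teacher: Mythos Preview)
Your proposal is correct and matches the paper's approach: the paper simply states that Lemma~\ref{enlemma3} ``naturally leads to'' Lemma~\ref{enlemma4} without writing out any details, and your argument is exactly the unpacking of that implication via the definition of a DCI-group together with the routine converse direction.
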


\begin{lemma}\emph{(see \cite[Proposition 2.3]{Kohl2007})}\label{enlemma5}
For a prime $p$ and the dicyclic group $T_{4p}=\langle a,b\mid a^{2p}=1,a^p=b^2, b^{-1}ab=a^{-1}\rangle$. Then
\begin{equation}\label{enequation3}
\mathrm{Aut(T_{4p})}=\left\{\alpha_{s, t} \mid s \in \mathbb{Z}_{2p}^*, t \in \mathbb{Z}_{2p}\right\},
\end{equation}
where $\alpha_{s, t}\left(a^i\right)=a^{s i}$, $\alpha_{s, t}\left(a^j b\right)=a^{s j+t} b$ for all $i, j \in \mathbb{Z}_{2p}$ and $\mathbb{Z}_{2p}^*$ is the multiplicative group of congruence classes modulo $2p$.
\end{lemma}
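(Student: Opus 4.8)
The plan is to determine an arbitrary automorphism $\alpha\in\operatorname{Aut}(T_{4p})$ by its action on the two generators $a$ and $b$, the key leverage being that the cyclic subgroup $\langle a\rangle$ of order $2p$ is forced to be mapped to itself. First I would record the structural facts. The subgroup $\langle a\rangle$ has index $2$, and every element outside it has the form $a^{j}b$; using $a^{t}b=ba^{-t}$ (which follows from $b^{-1}ab=a^{-1}$) one computes $(a^{j}b)^{2}=a^{j}ba^{j}b=a^{j}a^{-j}b^{2}=b^{2}=a^{p}$, so each such element has order $4$. For an odd prime $p$ we have $2p>4$, hence the only elements of order $2p$ lie in $\langle a\rangle$, so $\langle a\rangle$ is the unique cyclic subgroup of order $2p$ and is therefore characteristic. (Equivalently, the commutator subgroup is $\langle a^{2}\rangle$ and the abelianization is cyclic of order $4$, so $\langle a\rangle$ is the unique index-$2$ subgroup.)

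Second, I would use characteristicness to constrain $\alpha$. Since $\alpha(\langle a\rangle)=\langle a\rangle$ and $\alpha$ restricts to an automorphism of $\langle a\rangle\cong\mathbb{Z}_{2p}$, we must have $\alpha(a)=a^{s}$ with $s\in\mathbb{Z}_{2p}^{*}$. Because $\alpha$ is a bijection preserving $\langle a\rangle$ setwise, it maps the nontrivial coset $\langle a\rangle b$ onto itself, so $\alpha(b)=a^{t}b$ for some $t\in\mathbb{Z}_{2p}$. Thus $\alpha$ agrees with $\alpha_{s,t}$ on the generating set and hence everywhere, showing every automorphism occurs in the claimed list; injectivity of $(s,t)\mapsto\alpha_{s,t}$ is immediate from reading off $\alpha(a)=a^{s}$ and $\alpha(b)=a^{t}b$.

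Third, I would verify the converse: for each $s\in\mathbb{Z}_{2p}^{*}$ and $t\in\mathbb{Z}_{2p}$ the assignment $a\mapsto a^{s}$, $b\mapsto a^{t}b$ really extends to an automorphism. By the universal property of the presentation (von Dyck's theorem) it suffices to check the three defining relations are respected. The relation $a^{2p}=1$ is clear since $a^{2ps}=1$; for $a^{p}=b^{2}$ one has $(a^{t}b)^{2}=a^{p}$ by the computation above, while $(a^{s})^{p}=a^{sp}=a^{p}$ because every unit modulo $2p$ is odd, giving $sp\equiv p\pmod{2p}$; and $b^{-1}ab=a^{-1}$ is preserved since $(a^{t}b)^{-1}a^{s}(a^{t}b)=b^{-1}a^{s}b=a^{-s}=(a^{s})^{-1}$. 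As $\{a^{s},a^{t}b\}$ again generates $T_{4p}$, the induced endomorphism is surjective, hence bijective on the finite group. Combining the two directions gives the stated equality, with $\lvert\operatorname{Aut}(T_{4p})\rvert=\lvert\mathbb{Z}_{2p}^{*}\rvert\cdot\lvert\mathbb{Z}_{2p}\rvert=2p(p-1)$.

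The main obstacle is the very first step, establishing that $\langle a\rangle$ is characteristic, since this is exactly where the hypothesis that $p$ is an odd prime is used: the computation $(a^{j}b)^{2}=a^{p}$ only forces $a^{j}b$ to have order $4<2p$ when $p\geq 3$. The borderline case $p=2$ is genuinely degenerate — there $T_{8}$ is the quaternion group, every noncentral element has order $4=2p$, the subgroup $\langle a\rangle$ is no longer characteristic, and $\operatorname{Aut}(T_{8})$ is strictly larger than the family $\{\alpha_{s,t}\}$ — so that case must be treated separately or the cited computation of Kohl invoked directly. Once characteristicness is secured for odd $p$, the remaining work is the routine relation-checking above.
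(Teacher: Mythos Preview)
Your argument is correct and complete for odd primes $p$. The paper itself does not prove this lemma at all: it is quoted verbatim from Kohl's paper and used as a black box, so you are supplying a self-contained proof where the paper relies on a citation. Your route---show $\langle a\rangle$ is characteristic via the order computation $(a^{j}b)^{2}=a^{p}$, read off $\alpha(a)=a^{s}$ and $\alpha(b)=a^{t}b$, then verify the relations using von Dyck---is the standard one and is carried out cleanly.

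Your observation about $p=2$ is sharp and worth emphasising: the lemma as printed says ``for a prime $p$'', but it is genuinely false at $p=2$, since $\operatorname{Aut}(Q_{8})\cong S_{4}$ has order $24$ while the family $\{\alpha_{s,t}:s\in\mathbb{Z}_{4}^{*},\,t\in\mathbb{Z}_{4}\}$ has only $8$ elements. The paper sidesteps this by using Lemma~\ref{enlemma5} only in the odd-$p$ results (Lemmas~3.1--3.3 and Theorems~3.4--3.6 all begin ``Let $p$ be an odd prime'') and handling $p=2$ by the direct enumeration in Theorem~3.7, so its conclusions are unaffected; but you are right that the hypothesis in the lemma statement should read ``odd prime''.
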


\begin{lemma}\emph{(see \cite[Theorem 42]{Shanks1993})}\label{enlemma6}
Let $p$ be a prime and $\mathbb{Z}_{2p}^*$ be the multiplicative group of congruence classes modulo $2p$. Then $\mathbb{Z}_{2p}^*$ is cyclic.

\end{lemma}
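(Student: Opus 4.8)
The plan is to reduce the claim, via the Chinese Remainder Theorem, to the classical fact that the multiplicative group of a finite field is cyclic. First I would dispose of the degenerate case $p=2$: here $2p=4$ and $\mathbb{Z}_4^*=\{1,3\}$ is a group of order $2$, hence cyclic (generated by the class of $3$).

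For odd $p$, since $\gcd(2,p)=1$, the Chinese Remainder Theorem gives a ring isomorphism $\mathbb{Z}_{2p}\cong\mathbb{Z}_2\times\mathbb{Z}_p$, which restricts to a group isomorphism of unit groups
$$\mathbb{Z}_{2p}^*\cong\mathbb{Z}_2^*\times\mathbb{Z}_p^*.$$
Because $\mathbb{Z}_2^*$ is trivial, this yields $\mathbb{Z}_{2p}^*\cong\mathbb{Z}_p^*$, so it suffices to prove that $\mathbb{Z}_p^*$ is cyclic.

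To see that $\mathbb{Z}_p^*=\mathbb{F}_p^*$ is cyclic, I would argue that any finite subgroup $G$ of the multiplicative group of a field is cyclic. Write $|G|=n$ and, for each divisor $d\mid n$, let $\psi(d)$ denote the number of elements of $G$ of order exactly $d$. If $\psi(d)>0$, pick an element $x$ of order $d$; then the $d$ distinct powers of $x$ are all roots of $X^d-1$, and since a degree-$d$ polynomial over a field has at most $d$ roots, these account for every solution of $X^d=1$ in $G$. Hence every element of order $d$ lies in $\langle x\rangle$, so the elements of order exactly $d$ are precisely the $\phi(d)$ generators of this cyclic group; that is, $\psi(d)\in\{0,\phi(d)\}$ and in particular $\psi(d)\le\phi(d)$ for all $d\mid n$. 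Summing over divisors and invoking $\sum_{d\mid n}\psi(d)=n=\sum_{d\mid n}\phi(d)$ forces $\psi(d)=\phi(d)$ for every $d\mid n$; in particular $\psi(n)=\phi(n)>0$, so $G$ contains an element of order $n$ and is therefore cyclic.

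The main obstacle is this last step—establishing that $\mathbb{F}_p^*$ is cyclic—since the reduction by the Chinese Remainder Theorem is routine. The crux there is the field-theoretic input that $X^d-1$ has at most $d$ roots, which drives the inequality $\psi(d)\le\phi(d)$; once that is in hand, the conclusion follows purely from the totient counting identity $\sum_{d\mid n}\phi(d)=n$.
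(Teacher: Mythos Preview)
Your proof is correct. The paper does not actually supply a proof of this lemma; it simply cites it as a known result from Shanks, so there is nothing to compare against. Your argument---reducing via the Chinese Remainder Theorem to $\mathbb{Z}_p^*$ and then invoking the standard $\psi(d)\le\Phi(d)$ counting argument for finite multiplicative subgroups of a field---is one of the classical proofs and is complete as written.
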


\section{Enumerating dicirculant digraphs of order $4p$}

Let
$T_{4p}=\langle a,b\mid a^{2p}=1,a^p=b^2, b^{-1}ab=a^{-1}\rangle$
be the dicyclic group of order $4p$ ($p$ prime).
Take
\begin{equation}\label{enequation2}
A=T_{4 p} \backslash\{e\}=\left\{a^i, a^j b \mid i \in \mathbb{Z}_{2p} \backslash\{0\}, j \in \mathbb{Z}_{2p}\right\}~~\text{and}~~C=\{0,1\}.
\end{equation}
Then Aut $\left(T_{4 p}\right)$ is a permutation group acting on $A$ and $C^A$. For $S \subseteq A$, denote $f_S$ is the characteristic function of $S$, that is, $f_S(a)=1$ if $a \in S$, and $f_S(a)=0$ if $a \in A \backslash S$. Clearly, $f_S \in C^A$ and $C^A$ consists of all characteristic functions on $A$. Note Lemma \ref{enlemma4} that two dicirculant digraphs Cay $\left(T_{4 p}, S\right)$ and $\mathrm{Cay}\left(T_{4 p}, T\right)$ on $T_{4 p}$ are isomorphic if and only if there exists an automorphism $\alpha \in \operatorname{Aut}\left(T_{4 p}\right)$ such that $\alpha(S)=T$, which holds if and only if $f_S, f_T \in C^A$ are $\mathrm{Aut}\left(T_{4 p}\right)$-equivalent. Hence, the number of dicirculant digraphs up to isomorphism is equal to the number of orbits of the group action $\left(\mathrm{Aut}\left(T_{4 p}\right), C^A\right)$. Therefore, we should obtain the cycle index $\mathcal{I}(\mathrm{Aut}\left(T_{4 p}\right), A)$ of the permutation group $\mathrm{Aut}\left(T_{4 p}\right)$ acting $A$ for the purpose of enumerating  dicirculant digraphs.

Set $A_1=\langle a\rangle \backslash\{e\}=\left\{a^i \mid i \in \mathbb{Z}_{2p} \backslash\{0\}\right\}$ and $A_2=\langle a\rangle b=\left\{a^j b \mid j \in \mathbb{Z}_{2p}\right\}$. Then $A=A_1 \cup A_2$. By Lemma \ref{enlemma5},
we have $\alpha_{s, t}\left(A_1\right)=A_1$ and $\alpha_{s, t}\left(A_2\right)=A_2$ for each $\alpha_{s, t} \in \mathrm{Aut}\left(T_{4 p}\right)$.

Let $n$ be a positive integer. The Euler's totient function $\Phi(n)$ is the number of integers $k$ for which $1 \leq k \leq n$ such that the greatest common divisor $\mathrm{gcd}(n, k)$ is equal to 1. Let $n=p_1^{k_1} \cdots p_s^{k_x}$ be the prime factorization of $n$. Then the Euler's product formula states that
$$\Phi(n)=n \prod_{i=1}^r\left(1-\frac{1}{p_i}\right).$$
Let $p\geq 3$. By Lemma \ref{enlemma6}, one can see that $\mathbb{Z}_{2p}^*=\{1,3,\ldots,p-2,p+2,\ldots,2p-1\}$ is a cyclic group of order $\Phi(2p)=p-1$ ($p$ prime).

\begin{lemma}\label{enlemma13}
Let $p$ be an odd prime. Fix an element $1\neq s \in \mathbb{Z}_{2p}^*$. For any $t\in 2\mathbb{Z}_{2p}\setminus\{0\}=\{2,4,6,\ldots,2p-2\}$, there exists an unique $x\in \mathbb{Z}_{2p}^*$ such that
$x-sx=t $ in $2\mathbb{Z}_{2p}$.
\end{lemma}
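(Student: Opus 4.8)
The plan is to rewrite the relation $x - sx = t$ as the single congruence $x(1-s) \equiv t \pmod{2p}$ and to analyze the map $\phi \colon \mathbb{Z}_{2p}^* \to 2\mathbb{Z}_{2p}$ given by $\phi(x) = x(1-s)$. First I would record two elementary facts: every element of $\mathbb{Z}_{2p}^*$ is odd (a unit modulo $2p$ is coprime to $2$), so $1-s$ is even and $\phi(x) = x(1-s)$ indeed lands in $2\mathbb{Z}_{2p}$; moreover $\phi(x) \neq 0$ whenever $s \neq 1$, since $x$ is a unit and $1-s \neq 0$. Thus $\phi$ restricts to a map $\mathbb{Z}_{2p}^* \to 2\mathbb{Z}_{2p} \setminus \{0\}$, and both sets have exactly $\Phi(2p) = p-1$ elements. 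The statement then follows once I show $\phi$ is injective, because an injection between finite sets of equal size is a bijection, and the unique preimage of $t$ is precisely the claimed $x$.

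The heart of the argument is the computation $\gcd(1-s, 2p) = 2$. Since $1-s$ is even, it suffices to check $p \nmid (1-s)$, equivalently $s \not\equiv 1 \pmod p$. If $s$ were congruent to $1$ modulo $p$, then, as $s$ is odd, either $s \equiv 1 \pmod{2p}$ or $s \equiv p+1 \pmod{2p}$; the first is excluded by the hypothesis $s \neq 1$, and the second is impossible because $p+1$ is even and hence not a unit modulo $2p$. This is the step I expect to be the main obstacle, in the sense that it is where both hypotheses ($s$ a unit and $s \neq 1$) are genuinely used; everything else is bookkeeping. Writing $1-s = 2v$, the conclusion $\gcd(1-s,2p)=2$ says precisely that $v$ is invertible modulo $p$.

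With this in hand I would finish as follows. Writing $t = 2w$ with $w \not\equiv 0 \pmod p$ (as $t$ ranges over $\{2,4,\ldots,2p-2\}$), the congruence $x(1-s) \equiv t \pmod{2p}$ is equivalent, after cancelling the factor $2$, to $vx \equiv w \pmod p$, which has the unique solution $x \equiv v^{-1} w \pmod p$. It then remains to lift this to a unique representative in $\mathbb{Z}_{2p}^*$: the two residues modulo $2p$ reducing to $v^{-1}w$ modulo $p$ differ by $p$ and hence have opposite parities, so exactly one of them is odd; since $v^{-1}w \not\equiv 0 \pmod p$ both are coprime to $p$, and therefore exactly one is a unit modulo $2p$. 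This gives existence and uniqueness of $x \in \mathbb{Z}_{2p}^*$, completing the proof. The same reduction modulo $p$ together with the parity argument equivalently establishes the injectivity of $\phi$ referred to above.
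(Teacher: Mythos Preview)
Your proof is correct and follows essentially the same strategy as the paper: both arguments show that the map $x\mapsto x(1-s)$ is a bijection $\mathbb{Z}_{2p}^*\to 2\mathbb{Z}_{2p}\setminus\{0\}$, relying on the key fact that $p\nmid(s-1)$ (equivalently $\gcd(1-s,2p)=2$), and then conclude by the equality of cardinalities $|\mathbb{Z}_{2p}^*|=|2\mathbb{Z}_{2p}\setminus\{0\}|=p-1$.

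The only minor difference is one of presentation. The paper establishes injectivity by contradiction: assuming $x_1(1-s)=x_2(1-s)$, it computes the additive order of $x_2-x_1$ in $2\mathbb{Z}_{2p}$ to be $p$ and deduces $p\mid(s-1)$, which is impossible. You instead isolate $\gcd(1-s,2p)=2$ at the outset and then solve $x(1-s)\equiv t\pmod{2p}$ explicitly by reducing modulo $p$ and lifting, obtaining existence and uniqueness constructively. Your route is slightly more direct and yields the actual preimage, but the underlying content is the same.
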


\begin{proof}
Take an element $1\neq s \in \mathbb{Z}_{2p}^*$. It is clearly that $x-sx\in 2\mathbb{Z}_{2p}\setminus\{0\} ~\text{for~all}~ x,s\in \mathbb{Z}_{2p}^*.$ Assume that there are distinct elements  $x_1,x_2\in \mathbb{Z}_{2p}^*$ such that $x_1-sx_1=x_2-sx_2$ in $2\mathbb{Z}_{2p}\setminus\{0\}$. Then
$$(s-1)(x_2-x_1)\equiv0 \pmod {2p}.$$
This implies that
\begin{equation}\label{enequation10}
o(x_2-x_1)\mid (s-1),
\end{equation}
where $o(x_2-x_1)$ denotes the order of $x_2-x_1$ in $2\mathbb{Z}_{2p}$. Since
$$o(x_2-x_1)=\frac{p}{\gcd(\frac{x_2-x_1}{2},p)}=p,$$
and $s-1\neq p~\text{or}~2p$, (\ref{enequation10}) can not hold. So that is a injection from $\mathbb{Z}_{2p}^*$ to $2\mathbb{Z}_{2p}\setminus\{0\}$. Notice that $|\mathbb{Z}_{2p}^*|=|2\mathbb{Z}_{2p}\setminus\{0\}|=p-1$. The desired result holds.
\qed\end{proof}

Assume that $\mathbb{Z}_{2p}^*=\langle z\rangle$ for some integer $z \in \mathbb{Z}_{2p}^*$. Then, for any $s \in \mathbb{Z}_{2p}^*$, there exists $i_s \in \mathbb{Z}_{p-1}$ such that $s=z^{i_s}$. Furthermore, if $s$ ranges over all elements of $\mathbb{Z}_{2p}^*$, then $i_s$ ranges over all elements of $\mathbb{Z}_{p-1}$. we firstly obtain the cycle index $\mathcal{I}(\mathrm{Aut}\left(T_{4 p}\right), A)$.

\begin{lemma}\label{enlemma7}
Let $p$ be an odd prime. Let $A=T_{4 p} \backslash\{e\}$ and $\alpha_{s, t} \in \mathrm{Aut}\left(T_{4 p}\right)$ be defined as (\ref{enequation2}) and (\ref{enequation3}), respectively. Let $\mathbb{Z}_{2p}^*=\langle z\rangle$. Under the action of $\mathrm{Aut}\left(T_{4 p}\right)$ on $A$, the cycle type of $\alpha_{s, t}$ is given by $\mathcal{T}\left(\alpha_{s, t}\right)=\left(b_1\left(\alpha_{s, t}\right), b_2\left(\alpha_{s, t}\right), \ldots, b_{4 p-1}\left(\alpha_{s, t}\right)\right)$, where
\begin{equation}\label{enequation4}
b_k\left(\alpha_{1, t}\right)=\left\{
\begin{array}{ll}
4 p-1,      &\text{if}~ k=1  ~\text {and}~ t=0,\\[0.2cm]
2p-1,       &\text{if}~ k=1  ~\text {and}~ t\in \mathbb{Z}_{2p} \backslash\{0\},\\[0.2cm]
p,          &\text{if}~ k=2  ~\text {and}~ t=p,\\[0.2cm]
2,          &\text{if}~ k=p  ~\text {and}~ t\in 2\mathbb{Z}_{2p} \backslash\{0\},\\[0.2cm]
1,          &\text{if}~ k=2p ~\text {and}~ t\in (2\mathbb{Z}_{2p}+1) \backslash\{p\},\\[0.2cm]
0,          &\text{otherwise},
\end{array}\right.
\end{equation}
and for each $1 \neq s=z^{i_s} \in \mathbb{Z}_{2p}^*$ (i.e. $i_s\neq 0$) and $t \in 2\mathbb{Z}_{2p}$,
\begin{equation}\label{enequation5}
b_k\left(\alpha_{s, t}\right)=b_k\left(\alpha_{s, 0}\right)=\left\{
\begin{array}{ll}
3,               & \text{if}~ k=1,\\[0.2cm]
4\gcd(i_s, p-1), & \text{if}~ k=\frac{p-1}{\mathrm{gcd}\left(i_s, p-1\right)},\\[0.2cm]
0,               & \text{otherwise},
\end{array}\right.
\end{equation}
and for each $1 \neq s=z^{i_s} \in \mathbb{Z}_{2p}^*$ (i.e. $i_s\neq 0$) and $t \in 2\mathbb{Z}_{2p}+1$,
\begin{equation}\label{enequation11}
b_k\left(\alpha_{s, t}\right)=b_k\left(\alpha_{s, 1}\right)=\left\{
\begin{array}{ll}
1,               & \text{if}~ k=1,\\[0.2cm]
1,               & \text{if}~ k=2,\\[0.2cm]
4\gcd(i_s, p-1), & \text{if}~ k=\frac{p-1}{\mathrm{gcd}\left(i_s, p-1\right)},~\text{and}~ \frac{p-1}{\mathrm{gcd}\left(i_s, p-1\right)} ~\text{is~even},\\[0.2cm]
2\gcd(i_s, p-1), & \text{if}~ k=\frac{p-1}{\mathrm{gcd}\left(i_s, p-1\right)},~\text{and}~ \frac{p-1}{\mathrm{gcd}\left(i_s, p-1\right)} ~\text{is~odd},\\[0.2cm]
\gcd(i_s, p-1), & \text{if}~ k=\frac{2p-2}{\mathrm{gcd}\left(i_s, p-1\right)},~\text{and}~ \frac{p-1}{\mathrm{gcd}\left(i_s, p-1\right)} ~\text{is~odd},\\[0.2cm]
0,               & \text{otherwise}.
\end{array}\right.
\end{equation}
\end{lemma}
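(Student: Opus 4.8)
The plan is to use the decomposition $A=A_1\sqcup A_2$ noted above, on which every $\alpha_{s,t}$ acts separately, so that the cycle type of $\alpha_{s,t}$ on $A$ is the union of its cycle types on $A_1$ and on $A_2$. Identifying $A_1$ with $\mathbb{Z}_{2p}\setminus\{0\}$ through the exponent $i$ and $A_2$ with $\mathbb{Z}_{2p}$ through the exponent $j$, the automorphism $\alpha_{s,t}$ acts as the multiplication map $\mu_s\colon i\mapsto si$ on $A_1$ (independently of $t$) and as the affine map $\phi_{s,t}\colon j\mapsto sj+t$ on $A_2$. The case $s=1$ is immediate: $\mu_1$ is the identity on $A_1$, giving $2p-1$ fixed points, while $\phi_{1,t}$ is translation by $t$ on $\mathbb{Z}_{2p}$, whose cycles all have length $2p/\gcd(t,2p)$ with $\gcd(t,2p)$ of them. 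Since $2p$ has only the divisors $1,2,p,2p$ and $p$ is odd, sorting $t$ into $t=0$, $t=p$, $t$ even nonzero, and $t$ odd with $t\neq p$ yields exactly the five nonzero lines of (\ref{enequation4}).

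For $s\neq 1$ I would first record two facts used repeatedly. Writing $d=\mathrm{ord}(s)$ in $\mathbb{Z}_{2p}^*$, the isomorphism $\mathbb{Z}_{2p}^*\cong\mathbb{Z}_p^*$ (reduction mod $p$, legitimate by Lemma \ref{enlemma6}) together with $s=z^{i_s}$ gives $d=\frac{p-1}{\gcd(i_s,p-1)}$, and $s\neq 1$ forces $d\geq 2$ and $p\nmid(s-1)$, so $\gcd(s-1,2p)=2$ and hence $(1-s)\mathbb{Z}_{2p}=2\mathbb{Z}_{2p}$ (the subgroup statement underlying Lemma \ref{enlemma13}). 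The cycle type of $\mu_s$ on $A_1$ is then read off from the Chinese Remainder decomposition $\mathbb{Z}_{2p}\cong\mathbb{Z}_2\times\mathbb{Z}_p$, under which $\mu_s=\mathrm{id}\times\mu_{\bar s}$: the single element $(1,0)=a^p$ is fixed, while $\mu_{\bar s}$ acts on each of the two copies of $\mathbb{Z}_p\setminus\{0\}$ (the sets $(0,\ast)$ and $(1,\ast)$) as a multiplication of order $d$, contributing $\gcd(i_s,p-1)$ cycles of length $d$ per copy. Thus $\mu_s$ has $1$ fixed point and $2\gcd(i_s,p-1)$ cycles of length $d$ on $A_1$.

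The $t$-dependence is collapsed by conjugation inside $\mathrm{Aut}(T_{4p})$: from the composition law $\alpha_{x,y}\alpha_{s,t}\alpha_{x,y}^{-1}=\alpha_{s,\,xt+(1-s)y}$ and $(1-s)\mathbb{Z}_{2p}=2\mathbb{Z}_{2p}$, one checks that $\alpha_{s,t}$ is conjugate in $\mathrm{Aut}(T_{4p})$, hence of the same cycle type on $A$, to $\alpha_{s,0}$ when $t$ is even and to $\alpha_{s,1}$ when $t$ is odd; this proves the reductions $b_k(\alpha_{s,t})=b_k(\alpha_{s,0})$ and $b_k(\alpha_{s,t})=b_k(\alpha_{s,1})$ asserted in (\ref{enequation5}) and (\ref{enequation11}). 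For $\alpha_{s,0}$ the map $\phi_{s,0}=\mu_s$ on $A_2=\mathbb{Z}_{2p}$ has two fixed points ($j=0$ and $j=p$) and $2\gcd(i_s,p-1)$ cycles of length $d$; adding the $A_1$ contribution gives the $3$ fixed points and $4\gcd(i_s,p-1)$ cycles of length $d$ in (\ref{enequation5}).

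The remaining and most delicate computation is $\phi_{s,1}\colon j\mapsto sj+1$ on $A_2$, which I would again analyse through $\mathbb{Z}_{2p}\cong\mathbb{Z}_2\times\mathbb{Z}_p$. There $\phi_{s,1}=\tau\times\psi$, where $\tau$ is the transposition $x\mapsto x+1$ on $\mathbb{Z}_2$ and $\psi\colon y\mapsto\bar s y+1$ is an affine map of $\mathbb{Z}_p$; since $\bar s\neq 1$, $\psi$ has a unique fixed point and is conjugate to $\mu_{\bar s}$, so its cycle type is one fixed point plus $\gcd(i_s,p-1)$ cycles of length $d$. Applying the standard rule that a product of an $\ell$-cycle and an $m$-cycle splits into $\gcd(\ell,m)$ cycles of length $\mathrm{lcm}(\ell,m)$, the $2$-cycle $\tau$ combined with the fixed point of $\psi$ yields one $2$-cycle, and combined with each $d$-cycle of $\psi$ yields $2$ cycles of length $d$ when $d$ is even but a single cycle of length $2d$ when $d$ is odd. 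Merging with the unchanged $A_1$ data ($1$ fixed point, $2\gcd(i_s,p-1)$ cycles of length $d$) produces precisely (\ref{enequation11}), the split on the parity of $d$ matching the case distinction in that display. The main obstacle is this last step: keeping the bookkeeping straight across the parity of $d$, and noting that when $d=2$ the lines $k=2$ and $k=d$ coincide, so their counts must be added --- a direct check with $s\equiv -1\pmod{p}$, where $\phi_{s,1}^2=\mathrm{id}$ forces $p$ transpositions on $A_2$, confirms the totals.
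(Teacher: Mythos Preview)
Your proof is correct and reaches the same conclusions, but two of your three main steps proceed along different lines from the paper.

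For the reduction of the $t$-dependence when $s\neq 1$, the paper invokes Lemma~\ref{enlemma13} to find, for each even $t$, an explicit $x\in\mathbb{Z}_{2p}^*$ with $x-sx=t$, and then checks by hand that the translation $\beta\colon a^jb\mapsto a^{j+x}b$ carries cycles of $\alpha_{s,0}$ to cycles of $\alpha_{s,t}$ (and similarly for $\alpha_{s,1}$ and $\alpha_{s,t+1}$). Your argument packages the same idea as a conjugacy inside $\mathrm{Aut}(T_{4p})$ via the composition law $\alpha_{x,y}\alpha_{s,t}\alpha_{x,y}^{-1}=\alpha_{s,xt+(1-s)y}$ together with $(1-s)\mathbb{Z}_{2p}=2\mathbb{Z}_{2p}$; this is slightly slicker and makes Lemma~\ref{enlemma13} unnecessary as a separate statement.

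The more substantive divergence is in the cycle type of $\alpha_{s,1}$ on $A_2$. The paper computes directly: for each $j$ it determines the least $m$ with $s^{m}j+s^{m-1}+\cdots+1\equiv j\pmod{2p}$, factoring the congruence as $(s^{m-1}+\cdots+1)((s-1)j+1)\equiv 0$ and then splitting on whether $(s-1)j+1=p$ and on the parity of $o(s)$. You instead pass through the Chinese Remainder isomorphism $\mathbb{Z}_{2p}\cong\mathbb{Z}_2\times\mathbb{Z}_p$, write $\phi_{s,1}=\tau\times\psi$ with $\tau$ a $2$-cycle and $\psi$ an affine map conjugate to $\mu_{\bar s}$, and read off the cycle type from the $\gcd/\mathrm{lcm}$ rule for direct products of cycles. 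Your route avoids the elementwise case analysis and explains structurally why the parity of $d=o(s)$ governs the answer, while the paper's direct computation stays closer to the presentation of $T_{4p}$ and requires no auxiliary facts about products of permutations. Both are valid; yours is the more conceptual of the two.
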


\begin{proof}
Let $A_1$ and $A_2$ be defined as above. Since $\alpha_{s, t}\left(A_1\right)=A_1$ and $\alpha_{s, t}\left(A_2\right)=A_2$ for each $\alpha_{s, t} \in \mathrm{Aut}\left(T_{4 p}\right)$, we have $b_{2p+1}\left(\alpha_{s, t}\right)=\cdots=b_{4 p-1}\left(\alpha_{s, t}\right)=0$. For $\alpha_{s, t} \in \mathrm{Aut}\left(T_{4 p}\right)$, we consider it in the following two cases.

\noindent\emph{Case 1.} $s=1$.
Note that $\alpha_{1, t}\left(a^i\right)=a^i$ for each $i \in \mathbb{Z}_{2p} \backslash\{0\}$. Then the permutation $\alpha_{1, t}$ splits $A_1$ into $2p-1$ cycles  of length $1$. Notice that $\alpha_{1, t}\left(a^j b\right)=a^{j+t} b$ for $j \in \mathbb{Z}_{2p}$. If $t=0$, then $\alpha_{1,0}\left(a^j b\right)=a^j b$ for each $j \in \mathbb{Z}_{2p}$. Therefore, $\alpha_{1, t}$ splits $A_2$ into $2p$ cycles of length $1$. If $t \in \mathbb{Z}_{2p} \backslash\{0\}$, then the order of $t$ is
\begin{equation*}
o(t)=\frac{2p}{\mathrm{gcd}(t, 2p)}=\left\{
\begin{array}{ll}
2,     &t=p,\\[0.2cm]
p,     &t\in 2\mathbb{Z}_{2p} \backslash\{0\},\\[0.2cm]
2p,    &t\in (2\mathbb{Z}_{2p}+1) \backslash\{p\}.
\end{array}\right.
\end{equation*}

If $t=p$, then $a^j b \in A_2$ is in the cycle $\left(a^j b, \alpha_{1,p}(a^{j} b)\right)=\left(a^j b, a^{j+p} b\right)$ for any $j \in \mathbb{Z}_{2p}$. Therefore, the permutation $\alpha_{1, p}$ splits $A_2$ into $p$ cycles of length $2$.
If $t\in 2\mathbb{Z}_{2p} \backslash\{0\}$, then $a^j b \in A_2$ is in the cycle
$$\left(a^j b, \alpha_{1,t}(a^{j} b),\ldots, \alpha_{1,t}^{p-1}(a^{j} b) \right)=\left(a^j b, a^{j+t} b,\ldots, a^{j+(p-1)t} b\right),$$
for any $j \in \mathbb{Z}_{2p}$. Therefore, the permutation $\alpha_{1, t}$ splits $A_2$ into $2$ cycles of length $p$.
If $t\in (2\mathbb{Z}_{2p}+1) \backslash\{p\}$, then $a^j b \in A_2$ is in the cycle
$$\left(a^j b, \alpha_{1,t}(a^{j} b),\ldots, \alpha_{1,t}^{2p-1}(a^{j} b) \right)=\left(a^j b, a^{j+t} b,\ldots, a^{j+(2p-1)t} b\right),$$
for any $j \in \mathbb{Z}_{2p}$. Therefore, the permutation $\alpha_{1, t}$ splits $A_2$ into one cycle of length $2p$. Therefore, we have got the cycle type of $\alpha_{1, t}$, as shown in (\ref{enequation4}).

\noindent\emph{Case 2.} $s\neq 1$, say $s=z^{i_s}~(i_s\neq 0)$.
Note that $\alpha_{s, t}\left(a^i\right)=a^{s i}$ for $i \in \mathbb{Z}_{2p} \backslash\{0\}$ and $\alpha_{s, t}\left(a^j b\right)=a^{s j+t} b$ for $j \in \mathbb{Z}_{2p}$.
Firstly, we claim that $\alpha_{s, t}$  has the same cycle type as $\alpha_{s, 0}$, and $\alpha_{s, t+1}$  has the same cycle type as $\alpha_{s, 1}$, for each $t \in 2\mathbb{Z}_{2p}\setminus\{0\}$.

Since
$$\alpha_{s, t}\left(a^i\right)=\alpha_{s, t+1}\left(a^i\right)=a^{s i}=\alpha_{s, 0}\left(a^i\right)=\alpha_{s, 1}\left(a^i\right),$$ $\alpha_{s, t}$,  $\alpha_{s, t+1}$, $\alpha_{s, 0}$ and $\alpha_{s, 1}$ have the same cycle type in $A_1$ for each $t \in 2\mathbb{Z}_{2p}\setminus\{0\}$.

Set $t \in 2\mathbb{Z}_{2p}\setminus\{0\}$. Notice Lemma \ref{enlemma13} that there exists an unique $x\in \mathbb{Z}_{2p}^*$ such that
$x-sx=t $. Define $\beta$ is a bijection in $A_2$ such that $\beta(a^j b)=a^{j+x} b$ for $j \in \mathbb{Z}_{2p}$.
Assume that $\left(a^{j_0} b, a^{j_1} b, \ldots, a^{j_{r-1}} b\right)$ ($j_0, j_1, \ldots, j_{r-1} \in \mathbb{Z}_{2p}$) is a cycle of $\alpha_{s, 0}$, i.e. $j_{l}=s j_{l-1}$ for $l \in \mathbb{Z}_{r}$. Then, for each $l \in \mathbb{Z}_r$, we have
$\beta\left(a^{j_{l}} b\right)=a^{j_{l}+x}b, $
and
\begin{align*}
\alpha_{s, t}\left(\beta\left(a^{j_{l-1}} b\right)\right)&=\alpha_{s, t}\left(a^{j_{l-1}+x} b\right)
=a^{s\left(j_{l-1}+x\right)+t} b
=a^{j_{l}+sx+t} b=a^{j_{l}+x} b
=\beta\left(a^{j_{l}}b\right).
\end{align*}
Thus
$\left(\beta\left(a^{j_0} b\right), \beta\left(a^{j_1} b\right), \ldots, \beta\left(a^{j_{r-1}} b\right)\right)$
is a cycle of $\alpha_{s, t}$.

Similarly, we get that if  $\left(a^{j_0} b, a^{j_1} b, \ldots, a^{j_{m-1}} b\right)$ ($j_0, j_1, \ldots, j_{m-1} \in \mathbb{Z}_{2p}$) is a cycle of $\alpha_{s, 1}$, then
$$\left(\beta\left(a^{j_0} b\right), \beta\left(a^{j_1} b\right), \ldots, \beta\left(a^{j_{r-1}} b\right)\right)$$
is a cycle of $\alpha_{s, t+1}$.
Therefore, $\alpha_{s, t}$ and $\alpha_{s, 0}$ have the same cycle type in $A_2$, and $\alpha_{s, t+1}$  has the same cycle type as $\alpha_{s, 1}$ in $A_2$ for each $t \in 2\mathbb{Z}_{2p}\setminus\{0\}$.  Hence, we only need to study the cycle type of $\alpha_{s, 0}$ in $A=A_1 \cup A_2$ and the cycle type of $\alpha_{s, 1}$ in $A_2$.

\noindent\emph{Case 2.1.} We firstly study the cycle type of $\alpha_{s, 0}$ in $A=A_1 \cup A_2$.
For an element $s\in\mathbb{Z}_{2p}^*$, we have the order of $s$ is
$$o(s)=o\left(z^{i_s}\right)=\frac{p-1}{\mathrm{gcd}\left(i_s, p-1\right)}.$$
Then, $a^i \in A_1$ is in the cycle
$$\left(a^i, \alpha_{s, 0}\left(a^i\right), \alpha_{s, 0}^2\left(a^i\right), \ldots, \alpha_{s, 0}^{o(s)-1}\left(a^i\right)\right)=\left(a^i, a^{s i}, a^{s^2 i}, \ldots, a^{s^{o(s)-1} i}\right),$$
and
$a^j b\in A_2$ is in the cycle
$$\left(a^j b, \alpha_{s, 0}\left(a^j b\right), \alpha_{s, 0}^2\left(a^j b\right), \ldots, \alpha_{s, 0}^{o(s)-1}\left(a^j b\right)\right)=\left(a^j b, a^{s j} b, a^{s^2j} b, \ldots, a^{s^{o(s)-1}j}b\right),$$
for any $i, j \in \mathbb{Z}_{2p} \backslash\{0,p\}$.
Note that $\alpha_{s, 0}\left(a^p\right)=a^{sp}=a^{p}$ as $s\in \mathbb{Z}_{2p}^*$ is an odd number. So $a^p \in A_1$ is in the cycle $\left(a^p \right)$.  Note also that $\alpha_{s, 0}\left(a^0 b\right)=a^0 b$ and $\alpha_{s, 0}\left(a^p b\right)=a^{sp} b=a^{p} b$. So $a^0 b \in A_2$ is in the cycle $\left(a^0 b\right)$ and $a^p b \in A_2$ is in the cycle $\left(a^p b\right)$.
Thus, the permutation $\alpha_{s, 0}$ splits $A_1$ into
$\frac{2p-2}{o(s)}=2\mathrm{gcd}\left(i_s, p-1\right)$
cycles of length $o(s)$ and one cycle of length 1, and splits $A_2$ into $\frac{2p-2}{o(s)}=2\mathrm{gcd}\left(i_s, p-1\right)$
cycles of length $o(s)$ and two cycles of length 1.
Therefore, we have got the cycle type of $\alpha_{s, t}$, as shown in (\ref{enequation5}).

\noindent\emph{Case 2.2.} We next study the cycle type of $\alpha_{s, 1}$ in $A_2$. Clearly, $a^jb\in A_2$ is in the cycle
$$\left(a^j b, \alpha_{s, 1}\left(a^j b\right), \alpha_{s, 1}^2\left(a^j b\right), \ldots, \alpha_{s, 1}^{m-1}\left(a^j b\right)\right)=\left(a^j b, a^{s j+1} b, a^{s^2j+s+1} b, \ldots, a^{s^{m-1}j+s^{m-2}+\cdots+1}b\right),$$
where $m$ is the least number such that
$$s^{m}j+s^{m-1}+s^{m-2}+\cdots+1 \equiv j \pmod {2p},$$
that is,
$$(s^{m-1}+s^{m-2}+\cdots+1)\left((s-1)j+1\right) \equiv 0 \pmod {2p}.$$
If $(s-1)j+1=p$, then $m=2$ and $j\neq p$. Therefore, $a^jb$ is in the cycle $(a^jb, a^{sj+1}b)$. By Lemma \ref{enlemma13}, such $j$ and cycle is unique for each $s$.
If $(s-1)j+1\neq p$, then $o((s-1)j+1)=2p$ in $\mathbb{Z}_{2p}$. Hence, $$s^{m-1}+s^{m-2}+\cdots+1\equiv 0 \pmod {2p},$$
Notice that the least of $y$ such that
$$s^y-1=(s-1)(s^{y-1}+s^{y-2}+\cdots+1)\equiv 0 \pmod {2p}$$
is $o(s)$ in $\mathbb{Z}_{2p}^*$. Then $m\geq o(s)$. Since $s-1\neq p,2p$, we have
$$p\mid (s^{o(s)-1}+s^{o(s)-1}+\cdots+1).$$
If $o(s)$ is even, then $s^{o(s)-1}+s^{o(s)-1}+\cdots+1$ is even. Hence $m=o(s)$. Thus, the permutation $\alpha_{s, 1}$ splits $A_2$ into $\frac{2p-2}{o(s)}=2\mathrm{gcd}\left(i_s, p-1\right)$
cycles of length $o(s)$ and one cycle of length 2.
If $o(s)$ is odd, then  $s^{o(s)-1}+s^{o(s)-2}+\cdots+1=p$. Therefore,
$$2(s^{o(s)-1}+s^{o(s)-2}+\cdots+1)=s^{2o(s)-1}+s^{2o(s)-2}+\cdots+1=2p.$$
Then $m=2o(s)$. Thus, the permutation $\alpha_{s, 1}$
splits $A_2$ into
$\frac{2p-2}{2o(s)}=\mathrm{gcd}\left(i_s, p-1\right)$
cycles of length $2o(s)$ and one cycle of length 2.
Therefore, we have got the cycle type of $\alpha_{s, t}$, as shown in (\ref{enequation11}).
\qed\end{proof}

According to Lemma \ref{enlemma7}, we obtain the cycle index of $\mathrm{Aut}\left(T_{4p}\right)$ acting on $A=T_{4p} \backslash\{e\}$.

\begin{lemma}\label{enlemma9}
Let $p$ be an odd prime. The cycle index of $\mathrm{Aut}\left(T_{4 p}\right)$ acting on $A=T_{4 p} \backslash\{e\}$ is given by
\begin{align*}
\mathcal{I}\left(\mathrm{Aut}\left(T_{4 p}\right), A\right)&=\frac{1}{2p} x_1^{2p-1}\left(x_p^2+x_{2p}-x_1^{2p}-x_2^p\right)
+\frac{1}{2(p-1)} x_1^3 \cdot \sum_{d \mid(p-1)} \Phi(d) x_d^{\frac{4(p-1)}{d}}\\[0.3cm]
&~~+\frac{1}{2(p-1)}x_1x_2\sum_{\substack{d\mid (p-1)\\ d~\text{even}}}\Phi(d)x_d^{\frac{4(p-1)}{d}}+\frac{1}{2(p-1)}x_1x_2\sum_{\substack{d\mid (p-1)\\ d~\text{odd}}}\Phi(d)x_d^{\frac{2(p-1)}{d}}x_{2d}^{\frac{p-1}{d}},
\end{align*}
where $\Phi(\cdot)$ denotes the Euler's totient function.
\end{lemma}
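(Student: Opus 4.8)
The plan is to substitute the cycle types computed in Lemma \ref{enlemma7} directly into the definition (\ref{enequation1}) of the cycle index and then reorganize the resulting double sum according to the order of $s$. Since $\mathrm{Aut}(T_{4p})=\{\alpha_{s,t}\mid s\in\mathbb{Z}_{2p}^*,\,t\in\mathbb{Z}_{2p}\}$, we have $|\mathrm{Aut}(T_{4p})|=2p(p-1)$, so
$$\mathcal{I}(\mathrm{Aut}(T_{4p}),A)=\frac{1}{2p(p-1)}\sum_{s\in\mathbb{Z}_{2p}^*}\sum_{t\in\mathbb{Z}_{2p}}\prod_{k}x_k^{b_k(\alpha_{s,t})}.$$
I would split the double sum into the block $s=1$ and the block $s\neq 1$, matching Cases~1 and~2 of Lemma \ref{enlemma7}.

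For $s=1$ I would simply read off the four monomials from (\ref{enequation4}): the single element $t=0$ contributes $x_1^{4p-1}$; the single element $t=p$ contributes $x_1^{2p-1}x_2^p$; each of the $p-1$ elements with $t\in 2\mathbb{Z}_{2p}\setminus\{0\}$ contributes $x_1^{2p-1}x_p^2$; and each of the $p-1$ elements with $t\in(2\mathbb{Z}_{2p}+1)\setminus\{p\}$ contributes $x_1^{2p-1}x_{2p}$. Thus the $s=1$ block equals
$$\frac{1}{2p(p-1)}\left(x_1^{4p-1}+x_1^{2p-1}x_2^p+(p-1)x_1^{2p-1}x_p^2+(p-1)x_1^{2p-1}x_{2p}\right).$$

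For $s\neq 1$ the crucial point from Lemma \ref{enlemma7} is that, for fixed $s$, all $p$ even values of $t$ produce the single monomial $x_1^3x_{o(s)}^{4\gcd(i_s,p-1)}$ coming from (\ref{enequation5}), while all $p$ odd values of $t$ produce one monomial coming from (\ref{enequation11}) that depends only on the parity of $o(s)$. I would then group the elements $s\neq 1$ by the cycle length $d:=o(s)=(p-1)/\gcd(i_s,p-1)$. Because $\mathbb{Z}_{2p}^*$ is cyclic of order $p-1$ (Lemma \ref{enlemma6}), the number of $s$ with $o(s)=d$ equals $\Phi(d)$ for every divisor $d\mid(p-1)$ with $d>1$, and for each such $s$ one has $\gcd(i_s,p-1)=(p-1)/d$. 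Substituting $4\gcd(i_s,p-1)=4(p-1)/d$, $2\gcd(i_s,p-1)=2(p-1)/d$ and $\gcd(i_s,p-1)=(p-1)/d$, and carrying the common factor $p/(2p(p-1))=1/(2(p-1))$, the even-$t$ part becomes $\frac{1}{2(p-1)}x_1^3\sum_{d\mid(p-1),\,d>1}\Phi(d)x_d^{4(p-1)/d}$, and the odd-$t$ part splits, according to whether $d$ is even or odd, into the two remaining sums of the statement, still restricted to $d>1$.

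The last and most delicate step is the bookkeeping that makes all three divisor sums run over every $d\mid(p-1)$, including $d=1$, as written in the statement. Formally extending each sum to $d=1$ introduces precisely the two spurious terms $\frac{1}{2(p-1)}x_1^{4p-1}$ (from the $x_1^3$-sum, as $\Phi(1)x_1^{4(p-1)}\cdot x_1^3=x_1^{4p-1}$) and $\frac{1}{2(p-1)}x_1^{2p-1}x_2^p$ (from the odd-$d$ sum, since $1$ is odd). I would add these in and subtract them back; the two subtracted copies combine with the $s=1$ block, and collecting the coefficients of $x_1^{4p-1}$, $x_1^{2p-1}x_2^p$, $x_1^{2p-1}x_p^2$ and $x_1^{2p-1}x_{2p}$ collapses them into $\frac{1}{2p}x_1^{2p-1}\left(x_p^2+x_{2p}-x_1^{2p}-x_2^p\right)$, the first displayed term. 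I expect this compensation-and-recombination to be the main obstacle, since it is exactly where the two negative signs in the statement originate; the rest is a direct transcription of Lemma \ref{enlemma7} combined with the standard count of elements of each order in a cyclic group.
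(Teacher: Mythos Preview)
Your proposal is correct and follows essentially the same route as the paper: split the sum over $\alpha_{s,t}$ into the $s=1$ block and the $s\neq 1$ block, for $s\neq 1$ separate even and odd $t$ and invoke Lemma \ref{enlemma7} so that each half collapses to $p$ identical monomials, regroup $s$ by $d=o(s)$ using $\Phi(d)$ elements of each order in the cyclic group $\mathbb{Z}_{2p}^*$, and finally extend the divisor sums to $d=1$ while subtracting the two artificial terms $\tfrac{1}{2(p-1)}x_1^{4p-1}$ and $\tfrac{1}{2(p-1)}x_1^{2p-1}x_2^p$, which merge with the $s=1$ block to produce $\tfrac{1}{2p}x_1^{2p-1}(x_p^2+x_{2p}-x_1^{2p}-x_2^p)$. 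The paper carries out exactly this computation, including the same add-and-subtract bookkeeping you flag as the delicate step.
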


\begin{proof}
By Lemma \ref{enlemma7}, the cycle index of $\mathrm{Aut}\left(T_{4 p}\right)$ acting on $A=T_{4p} \backslash\{e\}$ is
\begin{align*}
&\mathcal{I}\left(\mathrm{Aut}(T_{4 p}), A\right)\\
&=\frac{1}{\left|\mathrm{Aut}\left(T_{4 p}\right)\right|} \sum_{\alpha_{s, t} \in \mathrm{Aut}\left(T_{4p}\right)} x_1^{b_1\left(\alpha_{s, t}\right)} x_2^{b_2\left(\alpha_{s, t}\right)} \cdots x_{4p-1}^{b_{4p-1}\left(\alpha_{s, t}\right)}\\[0.3cm]
&=\frac{1}{2p(p-1)} \sum_{s\in\mathbb{Z}_{2p}^*}\sum_{t\in\mathbb{Z}_{2p}} x_1^{b_1\left(\alpha_{s, t}\right)} x_2^{b_2\left(\alpha_{s, t}\right)} \cdots x_{4p-1}^{b_{4p-1}\left(\alpha_{s, t}\right)}\\[0.3cm]
&=\frac{1}{2p(p-1)} \left[\sum_{t\in\mathbb{Z}_{2p}} x_1^{b_1\left(\alpha_{1, t}\right)} x_2^{b_2\left(\alpha_{1, t}\right)} \cdots x_{4p-1}^{b_{4p-1}\left(\alpha_{1, t}\right)}\right.\\[0.3cm]
&\left.~~~~+\sum_{s\in\mathbb{Z}_{2p}^*\backslash\{1\}}\left(\sum_{t\in2\mathbb{Z}_{2p}} x_1^{b_1\left(\alpha_{s, t}\right)} x_2^{b_2\left(\alpha_{s, t}\right)} \cdots x_{4p-1}^{b_{4p-1}\left(\alpha_{s, t}\right)}
+\sum_{t\in2\mathbb{Z}_{2p}+1} x_1^{b_1\left(\alpha_{s, t}\right)} x_2^{b_2\left(\alpha_{s, t}\right)} \cdots x_{4p-1}^{b_{4p-1}\left(\alpha_{s, t}\right)}\right)\right]\\[0.3cm]
&=\frac{1}{2p(p-1)} \left[\sum_{t\in\mathbb{Z}_{2p}} x_1^{b_1\left(\alpha_{1, t}\right)} x_2^{b_2\left(\alpha_{1, t}\right)} \cdots x_{4p-1}^{b_{4p-1}\left(\alpha_{1, t}\right)}+p\sum_{s\in\mathbb{Z}_{2p}^*\backslash\{1\}} x_1^{b_1\left(\alpha_{s, 0}\right)} x_2^{b_2\left(\alpha_{s, 0}\right)} \cdots x_{4p-1}^{b_{4p-1}\left(\alpha_{s, 0}\right)}\right.\\[0.3cm]
&\left.~~~~+p\sum_{s\in\mathbb{Z}_{2p}^*\backslash\{1\}} x_1^{b_1\left(\alpha_{s, 1}\right)} x_2^{b_2\left(\alpha_{s, 1}\right)} \cdots x_{4p-1}^{b_{4p-1}\left(\alpha_{s, 1}\right)}\right]\\[0.3cm]
&=\frac{1}{2p(p-1)} \left[x_1^{4p-1}+x_1^{2p-1}\left(x_2^p+(p-1)x_p^2+(p-1)x_{2p}\right)+px_1^3\sum_{s=z^{i_s}\in \mathbb{Z}_{2p}^*\backslash\{1\}}x_{\frac{p-1}{\gcd(i_s, p-1)}}^{4\gcd(i_s,p-1)}\right.\\[0.3cm]
&\left.~~~~+px_1x_2\left(\sum_{\substack{s=z^{i_s}\in \mathbb{Z}_{2p}^*\backslash\{1\}\\\frac{p-1}{\gcd(i_s, p-1)}~\text{even}}}x_{\frac{p-1}{\gcd(i_s, p-1)}}^{4\gcd(i_s,p-1)}+\sum_{\substack{s=z^{i_s}\in \mathbb{Z}_{2p}^*\backslash\{1\}\\\frac{p-1}{\gcd(i_s, p-1)}~\text{odd}}}x_{\frac{p-1}{\gcd(i_s, p-1)}}^{2\gcd(i_s,p-1)}x_{\frac{2p-2}{\gcd(i_s, p-1)}}^{\gcd(i_s,p-1)}\right)\right]\\[0.3cm]
&=\frac{1}{2p(p-1)} \left[x_1^{4p-1}+x_1^{2p-1}\left(x_2^p+(p-1)x_p^2+(p-1)x_{2p}\right)+px_1^3\sum_{i_s\in \mathbb{Z}_{p-1}\setminus\{0\}}x_{\frac{p-1}{\gcd(i_s, p-1)}}^{4\gcd(i_s,p-1)}\right.\\[0.3cm]
&\left.~~~~+px_1x_2\left(\sum_{\substack{i_s\in \mathbb{Z}_{p-1}\backslash\{0\}\\\frac{p-1}{\gcd(i_s, p-1)}~\text{even}}}x_{\frac{p-1}{\gcd(i_s, p-1)}}^{4\gcd(i_s,p-1)}+\sum_{\substack{i_s\in \mathbb{Z}_{p-1}\backslash\{0\}\\\frac{p-1}{\gcd(i_s, p-1)}~\text{odd}}}x_{\frac{p-1}{\gcd(i_s, p-1)}}^{2\gcd(i_s,p-1)}x_{\frac{2p-2}{\gcd(i_s, p-1)}}^{\gcd(i_s,p-1)}\right)\right]\\[0.3cm]
&=\frac{1}{2p(p-1)} \left(x_1^{4p-1}+x_1^{2p-1}\left(x_2^p+(p-1)x_p^2+(p-1)x_{2p}\right)+px_1^3\sum_{\substack{d\mid (p-1)\\d\neq 1}}\Phi(d)x_d^{\frac{4(p-1)}{d}}\right.\\[0.3cm]
&\left.~~~~+px_1x_2\sum_{\substack{d\mid (p-1)\\ d~\text{even}}}\Phi(d)x_d^{\frac{4(p-1)}{d}}+px_1x_2\sum_{\substack{d\mid (p-1)\\d\neq 1, d~\text{odd}}}\Phi(d)x_d^{\frac{2(p-1)}{d}}x_{2d}^{\frac{p-1}{d}}\right)\\[0.3cm]
&=\frac{1}{2p(p-1)} \left(x_1^{4p-1}+x_1^{2p-1}\left(x_2^p+(p-1)x_p^2+(p-1)x_{2p}\right)+px_1^3\sum_{d\mid (p-1)}\Phi(d)x_d^{\frac{4(p-1)}{d}}\right.\\[0.3cm]
&\left.~~~~-px_1^{4p-1}+px_1x_2\sum_{\substack{d\mid (p-1)\\ d~\text{even}}}\Phi(d)x_d^{\frac{4(p-1)}{d}}+px_1x_2\sum_{\substack{d\mid (p-1)\\ d~\text{odd}}}\Phi(d)x_d^{\frac{2(p-1)}{d}}x_{2d}^{\frac{p-1}{d}}-px_1^{2p-1}x_2^p\right)\\[0.3cm]
&=\frac{1}{2p} x_1^{2p-1}\left(x_p^2+x_{2p}-x_1^{2p}-x_2^p\right)
+\frac{1}{2(p-1)} x_1^3 \cdot \sum_{d \mid(p-1)} \Phi(d) x_d^{\frac{4(p-1)}{d}}\\[0.3cm]
&~~~~+\frac{1}{2(p-1)}x_1x_2\sum_{\substack{d\mid (p-1)\\ d~\text{even}}}\Phi(d)x_d^{\frac{4(p-1)}{d}}+\frac{1}{2(p-1)}x_1x_2\sum_{\substack{d\mid (p-1)\\ d~\text{odd}}}\Phi(d)x_d^{\frac{2(p-1)}{d}}x_{2d}^{\frac{p-1}{d}},
\end{align*}
where $\Phi(\cdot)$ denotes the Euler's totient function.
\qed\end{proof}

By Lemmas \ref{enlemma8} and  \ref{enlemma9}, we get the number of dicirculant digraphs up to isomorphism immediately.
\begin{theorem}\label{entheorem1}
Let $p$ be an odd prime. The number of dicirculant digraphs up to isomorphism is equal to
\begin{align}\label{enequation6}
\nonumber\mathcal{N}&=\frac{2^{2p-1}}{p}\left(3-2^{2p-1}-2^{p-1}\right)+\frac{4}{p-1} \cdot \sum_{d \mid(p-1)} \Phi(d) 2^{\frac{4(p-1)}{d}}\\[0.3cm]
&~~+\frac{2}{(p-1)}\sum_{\substack{d\mid (p-1)\\ d~\text{even}}}\Phi(d)2^{\frac{4(p-1)}{d}}+\frac{2}{(p-1)}\sum_{\substack{d\mid (p-1)\\ d~\text{odd}}}\Phi(d)2^{\frac{3(p-1)}{d}},
\end{align}
where $\Phi(\cdot)$ is the Euler's totient function.
\end{theorem}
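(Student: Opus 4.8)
The plan is to read off the count directly from the P\'olya Enumeration Theorem (Lemma~\ref{enlemma8}) together with the explicit cycle index computed in Lemma~\ref{enlemma9}. As established in the discussion opening Section~3, the number of dicirculant digraphs up to isomorphism equals the number of orbits of the action $(\mathrm{Aut}(T_{4p}), C^A)$, where $A = T_{4p}\setminus\{e\}$ and $C = \{0,1\}$, so that $|C| = m = 2$. Lemma~\ref{enlemma8} then expresses this orbit count as $P_H(m,m,\ldots,m)$, that is, $\mathcal{I}(\mathrm{Aut}(T_{4p}),A)$ evaluated at $x_1 = x_2 = \cdots = x_{4p-1} = 2$. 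Hence the entire argument reduces to substituting $x_i = 2$ into the four-term expression of Lemma~\ref{enlemma9} and simplifying termwise.

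First I would substitute into the leading term. Setting every $x_i = 2$ turns $x_1^{2p-1}(x_p^2 + x_{2p} - x_1^{2p} - x_2^p)$ into $2^{2p-1}(4 + 2 - 2^{2p} - 2^p)$, and the prefactor $\tfrac{1}{2p}$ combines with the constant $6 - 2^{2p} - 2^p$ by extracting a factor of $2$, yielding $\tfrac{2^{2p-1}}{p}(3 - 2^{2p-1} - 2^{p-1})$, which matches the first summand of (\ref{enequation6}). For the remaining three terms every monomial $x_d^{\,e}$ or $x_{2d}^{\,e'}$ becomes $2^{\,e}$ or $2^{\,e'}$, so the divisor sums are preserved verbatim while the leading $x$-factors contribute numerical constants: $x_1^3 \mapsto 8$ cancels against $\tfrac{1}{2(p-1)}$ to give $\tfrac{4}{p-1}$, and $x_1 x_2 \mapsto 4$ gives $\tfrac{2}{p-1}$ in both the even-$d$ and odd-$d$ sums. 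In the odd-$d$ sum one must additionally merge the two powers of two via $2^{\,2(p-1)/d}\cdot 2^{\,(p-1)/d} = 2^{\,3(p-1)/d}$, which produces the final summand of (\ref{enequation6}).

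The computation carries no genuine obstacle: each step is a routine evaluation and regrouping of constants. The only points demanding care are the factor-of-two rearrangement in the first term (writing $6 - 2^{2p} - 2^p = 2(3 - 2^{2p-1} - 2^{p-1})$ so as to cancel one factor of $2$ against $\tfrac{1}{2p}$) and the exponent addition $\tfrac{2(p-1)}{d} + \tfrac{p-1}{d} = \tfrac{3(p-1)}{d}$ in the odd-$d$ divisor sum; both are bookkeeping rather than mathematics. No further lemma is needed, since Lemma~\ref{enlemma9} already encodes all the structural information about the action of $\mathrm{Aut}(T_{4p})$ on $A$, and the heavy lifting has been carried out there.
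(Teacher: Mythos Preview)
Your proposal is correct and mirrors the paper's own argument: the paper simply states that Theorem~\ref{entheorem1} follows immediately from Lemmas~\ref{enlemma8} and~\ref{enlemma9}, i.e., from substituting $x_i=2$ into the cycle index, which is precisely what you carry out and simplify term by term.
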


In \cite{Mishna2000}, Mishna calculated the number of the circulant digraphs of order $p$~($p$ prime) up to isomorphism. With the similar proof of Theorem 2.12 in \cite{Mishna2000}, we get the number of circulant digraphs of order $2p$~($p$ prime).
\begin{lemma}\label{enlemma11}
Let $p$ be an odd prime. The number of circulant digraphs  of order $2p$ up to isomorphism is given by
\begin{equation}\label{enequation7}
\mathcal{N}_c=\frac{1}{p-1} \sum_{d \mid(p-1)} \Phi(d) 2^{\frac{2p-2}{d}+1},
\end{equation}
where $\Phi(\cdot)$ is the Euler's totient function.
\end{lemma}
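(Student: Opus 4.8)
The plan is to run the same P\'olya-counting machinery used in Lemmas \ref{enlemma7}--\ref{enlemma9} and Theorem \ref{entheorem1}, but now for the cyclic group $\mathbb{Z}_{2p}$ in place of $T_{4p}$; this mirrors the proof of Theorem~2.12 in \cite{Mishna2000}. A circulant digraph of order $2p$ is a Cayley digraph $\Cay(\mathbb{Z}_{2p},S)$ with $S\subseteq A:=\mathbb{Z}_{2p}\setminus\{0\}$. Since $\mathbb{Z}_{2p}$ is a DCI-group (this is well known for cyclic groups of order $2p$ with $p$ prime), two such digraphs are isomorphic exactly when $\alpha(S)=T$ for some $\alpha\in\mathrm{Aut}(\mathbb{Z}_{2p})$, so the isomorphism classes are precisely the orbits of $\left(\mathrm{Aut}(\mathbb{Z}_{2p}),C^A\right)$ with $C=\{0,1\}$. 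By Lemma \ref{enlemma8} their number equals $P_H(2,2,\dots,2)$, where $H=\mathrm{Aut}(\mathbb{Z}_{2p})=\{\alpha_s:x\mapsto sx\mid s\in\mathbb{Z}_{2p}^*\}$ acts on $A$. By Lemma \ref{enlemma6}, $H\cong\mathbb{Z}_{2p}^*$ is cyclic of order $\Phi(2p)=p-1$; fix a generator $\mathbb{Z}_{2p}^*=\langle z\rangle$. Thus everything reduces to determining the cycle index $\mathcal{I}(H,A)$.

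First I would compute the cycle type of each $\alpha_s$ by partitioning $A$ into the singleton $\{p\}$, the set of units $\mathbb{Z}_{2p}^*$ (the odd residues other than $p$), and the nonzero even residues $\{2,4,\dots,2p-2\}$; these blocks have sizes $1$, $p-1$, $p-1$ and each is $\alpha_s$-invariant. As $s$ is odd, $\alpha_s$ fixes $p$. On the units, $\alpha_s$ is multiplication by $s$ inside the cyclic group $\mathbb{Z}_{2p}^*$, hence a product of $(p-1)/o(s)$ cosets of $\langle s\rangle$, i.e.\ cycles all of length $o(s)$. On the even residues, writing each as $2m$ with $m\in\mathbb{Z}_p\setminus\{0\}$ and using $\alpha_s(2m)=2(sm)$, the action is conjugate to multiplication by $s$ on $\mathbb{Z}_p^*$; because reduction modulo $p$ is an isomorphism $\mathbb{Z}_{2p}^*\cong\mathbb{Z}_p^*$, the order of $s$ is unchanged, so this block again splits into $(p-1)/o(s)$ cycles of length $o(s)$. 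Consequently $\alpha_s$ contributes the monomial $x_1\,x_{o(s)}^{2(p-1)/o(s)}$.

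Finally I would aggregate over $s$ by its order. Since $H$ is cyclic of order $p-1$, it contains exactly $\Phi(d)$ elements of order $d$ for each $d\mid(p-1)$, so
\[
\mathcal{I}(H,A)=\frac{1}{p-1}\sum_{d\mid(p-1)}\Phi(d)\,x_1\,x_d^{\frac{2(p-1)}{d}}.
\]
Setting every indeterminate equal to $2$ as prescribed by Lemma \ref{enlemma8} then gives $\mathcal{N}_c=\frac{1}{p-1}\sum_{d\mid(p-1)}\Phi(d)\,2^{\frac{2p-2}{d}+1}$, which is exactly (\ref{enequation7}).

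The step I expect to be the main obstacle is the cycle-type analysis on the even residues: one must verify cleanly that $\alpha_s$ preserves this block and that its cycle lengths there equal $o(s)$, i.e.\ that the order of $s$ modulo $2p$ coincides with its order modulo $p$. This is where the Chinese Remainder isomorphism $\mathbb{Z}_{2p}^*\cong\mathbb{Z}_p^*$ (equivalently Lemma \ref{enlemma6}) does the real work; the remaining bookkeeping---fixing $p$, the regular action on the units, and collecting elements by order via $\Phi$---is routine. A secondary point worth stating explicitly is the DCI property of $\mathbb{Z}_{2p}$, which is what guarantees that orbit counting genuinely computes isomorphism classes.
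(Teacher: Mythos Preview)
Your proposal is correct and follows exactly the approach the paper indicates: the paper does not give a detailed proof of Lemma~\ref{enlemma11} but simply states that it follows ``with the similar proof of Theorem~2.12 in \cite{Mishna2000}'', i.e.\ by applying the P\'olya machinery to $\mathrm{Aut}(\mathbb{Z}_{2p})$ acting on $\mathbb{Z}_{2p}\setminus\{0\}$. Your write-up is precisely that computation carried out in full, and the cycle-type analysis (fixed point $p$, two blocks of size $p-1$ each breaking into cycles of length $o(s)$) is correct; the DCI input you flag is available from \cite{Muzychuk1995} since $2p$ is square-free.
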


Recall that a Cayley digraph $\operatorname{Cay}(G, S)$ is connected if and only if $\langle S\rangle=G$. Thus, for $S \subseteq A=T_{4 p} \backslash\{e\}$, the dicirculant digraph $\Cay\left(T_{4 p}, S\right)$ is disconnected if and only if
\begin{align*}
S \subseteq \langle a\rangle \backslash\{e\} ~~\text{or}~~ S=\left\{a^j b\right\}~~\text{or}~~ S=\left\{a^j b, a^{p+j}b\right\}\\[0.2cm]
 \text{or}~~ S=\left\{a^p, a^j b\right\} ~~\text{or}~~ S=\left\{a^p, a^j b, a^{p+j}b\right\} ~~\text{for}~~ j \in \mathbb{Z}_{2p},
\end{align*}
as $p$ is a prime. Note that
\begin{align*}
\Cay\left(T_{4 p},\left\{a^j b, a^{p+j}b\right\}\right) \cong \Cay\left(T_{4 p},\{b, a^pb\}\right),
\Cay\left(T_{4 p},\left\{a^p, a^jb\right\}\right) \cong \Cay\left(T_{4 p},\{a^p, b\}\right),\\[0.2cm]
\Cay\left(T_{4 p},\left\{a^j b\right\}\right) \cong \Cay\left(T_{4 p},\{b\}\right),
\Cay\left(T_{4 p},\left\{a^p, a^j b, a^{p+j}b\right\}\right) \cong \Cay\left(T_{4 p},\{a^p, b, a^pb\}\right),
\end{align*}
for each $j$ since $\alpha_{1, j}(b)=a^j b$ and $\alpha_{1, j}(a^pb)=a^{p+j} b$. Hence, from Theorem \ref{entheorem1} and Lemma \ref{enlemma11}, we have the number of connected dicirculant digraphs immediately.

\begin{theorem}
Let $p$ be an odd prime. The number of connected dicirculant digraphs up to isomorphism is equal to
$$
\mathcal{N}'=\mathcal{N}-\mathcal{N}_c-4,
$$
where $\mathcal{N}$ and $\mathcal{N}_c$ are shown in (\ref{enequation6}) and (\ref{enequation7}), respectively.
\end{theorem}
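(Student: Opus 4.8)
The plan is to count the disconnected dicirculant digraphs up to isomorphism and subtract their number from the total $\mathcal{N}$ of Theorem \ref{entheorem1}. Since $\Cay(T_{4p},S)$ is connected if and only if $\langle S\rangle=T_{4p}$, I first classify the subsets $S\subseteq A$ for which $\langle S\rangle$ is a proper subgroup; the displayed list preceding the theorem asserts that these are exactly $S\subseteq\langle a\rangle\setminus\{e\}$ together with the four families $\{a^jb\}$, $\{a^jb,a^{p+j}b\}$, $\{a^p,a^jb\}$ and $\{a^p,a^jb,a^{p+j}b\}$. Granting this classification, the proof reduces to counting the $\mathrm{Aut}(T_{4p})$-orbits it contains and then subtracting.

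The crucial point, and the main obstacle, is to verify that this list is exhaustive. I would rely on the identities $(a^jb)^2=a^p$ and $(a^jb)(a^{j'}b)^{-1}=a^{j-j'}$, both immediate from $b^2=a^p$ and $b^{-1}ab=a^{-1}$. If $S$ meets $A_2=\langle a\rangle b$, then $a^p\in\langle S\rangle$. Suppose moreover that $S$ contains either an element $a^i$ with $i\in\mathbb{Z}_{2p}\setminus\{0,p\}$, or two elements $a^jb,a^{j'}b$ with $j-j'\notin\{0,p\}$; then $\langle S\rangle$ contains some $a^d$ with $d\notin\{0,p\}$. Because $p$ is prime we have $\gcd(d,p)=1$, so $a^d$ together with $a^p$ generates all of $\langle a\rangle$, and with an element $a^jb$ this forces $\langle S\rangle=T_{4p}$, i.e.\ the digraph is connected. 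Contrapositively, a disconnected $S$ meeting $A_2$ can contain from $A_1$ at most $a^p$ and from $A_2$ at most a pair $\{a^jb,a^{p+j}b\}$ (three or more elements of $A_2$ always yield a pair with difference $\notin\{0,p\}$), which leaves precisely the four families above.

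Next I count the isomorphism classes. By Lemma \ref{enlemma4} two disconnected digraphs are isomorphic exactly when their connection sets are $\mathrm{Aut}(T_{4p})$-equivalent, and every $\alpha_{s,t}$ preserves the partition $A=A_1\cup A_2$. For $S\subseteq A_1=\langle a\rangle\setminus\{e\}$ the action of $\alpha_{s,t}$ is $a^i\mapsto a^{si}$, independent of $t$, so the $\mathrm{Aut}(T_{4p})$-orbits coincide with the $\mathbb{Z}_{2p}^{*}$-orbits of subsets of $\mathbb{Z}_{2p}\setminus\{0\}$; these are exactly the isomorphism classes of circulant digraphs of order $2p$ counted by $\mathcal{N}_c$ in Lemma \ref{enlemma11} (the empty connection set included). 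Each of the four families forms a single class, as the text records via $\alpha_{1,j}$, and they are pairwise distinct and distinct from the circulant classes because the invariant pair $(|S\cap A_1|,|S\cap A_2|)$ takes the four values $(0,1),(0,2),(1,1),(1,2)$ on them while $|S\cap A_2|=0$ on every circulant class.

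Finally I assemble the count. The disconnected dicirculant digraphs split into the $\mathcal{N}_c$ classes with $S\subseteq A_1$ and the $4$ exceptional classes, and these two groups are disjoint by the signature $(|S\cap A_1|,|S\cap A_2|)$; hence there are $\mathcal{N}_c+4$ disconnected classes in all. Subtracting from $\mathcal{N}$ yields $\mathcal{N}'=\mathcal{N}-\mathcal{N}_c-4$, as claimed. The only genuinely delicate step is the exhaustiveness argument of the second paragraph; the remaining bookkeeping, in particular the absence of double counting between the circulant classes and the four families, is settled by that signature invariant.
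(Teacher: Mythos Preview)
Your proof is correct and follows the same approach as the paper: classify the disconnected connection sets, observe that those contained in $\langle a\rangle\setminus\{e\}$ account for $\mathcal{N}_c$ classes while the four families meeting $A_2$ give one class each, and subtract. In fact you supply justifications the paper leaves implicit, namely the exhaustiveness of the list (via the identities $(a^jb)^2=a^p$ and $(a^jb)(a^{j'}b)^{-1}=a^{j-j'}$) and the identification of the $\mathrm{Aut}(T_{4p})$-orbits on subsets of $A_1$ with the $\mathbb{Z}_{2p}^*$-orbits counted by $\mathcal{N}_c$.
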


\begin{theorem}\label{entheorem2}
Let $p$ be an odd prime. The number of dicirculant digraphs of out-degree $k$ up to isomorphism is $1$ if $k=0$ or $k=4p-1$, and  $\mathcal{M}_k$ if $1\leq k\leq 4p-2$,
where
\begin{align}\label{enequation8}\nonumber
\mathcal{M}_k&=\frac{1}{2p}\left[\sum_{j=0}^2\binom{2}{j}\binom{2p-1}{k-pj}+\binom{2p-1}{k}+\binom{2p-1}{k-2p}
-\binom{4p-1}{k}-\sum_{j=0}^p\binom{p}{j}\binom{2p-1}{k-2j}\right]\\[0.3cm]\nonumber
&~~+\frac{1}{(p-1)}\left[\sum_{\substack{d \mid\gcd(p-1,k)\\d~\text{even}}}\Phi(d)\binom{\frac{4(p-1)}{d}}{\frac{k}{d}}+2\sum_{\substack{d \mid\gcd(p-1,k-1)\\d~\text{even}}}\Phi(d)\binom{\frac{4(p-1)}{d}}{\frac{k-1}{d}}\right.\\[0.3cm]\nonumber
&~~~~\left.+2\sum_{\substack{d \mid\gcd(p-1,k-2)\\d~\text{even}}}\Phi(d)\binom{\frac{4(p-1)}{d}}{\frac{k-2}{d}}+\sum_{\substack{d \mid\gcd(p-1,k-3)\\d~\text{even}}}\Phi(d)\binom{\frac{4(p-1)}{d}}{\frac{k-3}{d}}\right]\\[0.3cm]\nonumber
&~~+\frac{1}{2(p-1)}\left[\sum_{i=0,3}\sum_{\substack{d \mid\gcd(p-1,k-i)\\d~\text{odd}}}\left(\Phi(d)\binom{\frac{4(p-1)}{d}}{\frac{k-i}{d}}
+\sum_{j=0}^{\frac{p-1}{d}}\binom{\frac{2(p-1)}{d}}{\frac{k-i-2dj}{d}}\binom{\frac{p-1}{d}}{j}\right)\right.\\[0.3cm]
&~~~~+\left.\sum_{i=1,2}\sum_{\substack{d \mid\gcd(p-1,k-i)\\d~\text{odd}}}\left(3\Phi(d)\binom{\frac{4(p-1)}{d}}{\frac{k-i}{d}}
+\sum_{j=0}^{\frac{p-1}{d}}\binom{\frac{2(p-1)}{d}}{\frac{k-i-2dj}{d}}\binom{\frac{p-1}{d}}{j}\right)\right].
\end{align}
\end{theorem}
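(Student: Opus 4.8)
The plan is to reinterpret $\mathcal{M}_k$ as a coefficient in a substituted cycle index via Lemma~\ref{enlemma10}, and then to extract that coefficient from the explicit formula of Lemma~\ref{enlemma9}. First I would dispose of the boundary cases: since the out-degree of $\Cay(T_{4p},S)$ equals $|S|$, a digraph of out-degree $k$ corresponds to a $k$-subset $S\subseteq A$, and for $k=0$ (resp.\ $k=4p-1$) the only choice is $S=\emptyset$ (resp.\ $S=A$), giving a single isomorphism class. For $1\le k\le 4p-2$, Lemma~\ref{enlemma4} shows $\Cay(T_{4p},S)\cong\Cay(T_{4p},T)$ with $|S|=|T|=k$ holds exactly when $S,T$ lie in one $\mathrm{Aut}(T_{4p})$-orbit, so $\mathcal{M}_k$ is the number of $\mathrm{Aut}(T_{4p})$-classes of $k$-subsets of $A$. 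By Lemma~\ref{enlemma10} this equals the coefficient of $x^k$ in $P_H(1+x,1+x^2,\ldots,1+x^{4p-1})$, where $P_H=\mathcal{I}(\mathrm{Aut}(T_{4p}),A)$ is the cycle index of Lemma~\ref{enlemma9}. The whole task thus reduces to substituting $x_i\mapsto 1+x^i$ into the four summands of that cycle index and reading off the coefficient of $x^k$, using only the binomial identity $(1+x^i)^m=\sum_j\binom{m}{j}x^{ij}$.

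For the first summand $\frac{1}{2p}x_1^{2p-1}(x_p^2+x_{2p}-x_1^{2p}-x_2^p)$ I would expand the four products $(1+x)^{2p-1}(1+x^p)^2$, $(1+x)^{2p-1}(1+x^{2p})$, $(1+x)^{4p-1}$ and $(1+x)^{2p-1}(1+x^2)^p$; the coefficient of $x^k$ in each is a short convolution of binomial coefficients, and together they reproduce the first bracket of \eqref{enequation8} term by term. In the remaining summands the exponent $x^{dj}$ forces the divisibility $d\mid(k-i)$, which is precisely what converts the sum over $d\mid(p-1)$ into the conditions $d\mid\gcd(p-1,k-i)$ of the statement.

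The delicate part, and the step I expect to be the real obstacle, is recombining the last three summands into the second and third brackets of \eqref{enequation8}. After substitution they read $\frac{1}{2(p-1)}(1+x)^3\sum_{d\mid(p-1)}\Phi(d)(1+x^d)^{4(p-1)/d}$, then $\frac{1}{2(p-1)}(1+x)(1+x^2)\sum_{d~\text{even}}\Phi(d)(1+x^d)^{4(p-1)/d}$, and finally $\frac{1}{2(p-1)}(1+x)(1+x^2)\sum_{d~\text{odd}}\Phi(d)(1+x^d)^{2(p-1)/d}(1+x^{2d})^{(p-1)/d}$. The key point is that $(1+x)^3$ weights the four shifts $i=0,1,2,3$ by $\binom{3}{i}=(1,3,3,1)$, whereas $(1+x)(1+x^2)=1+x+x^2+x^3$ weights them uniformly by $(1,1,1,1)$. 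I would split the first of these summands by the parity of $d$: its even-$d$ part merges with the wholly even second summand, producing weights $(1,3,3,1)+(1,1,1,1)=(2,4,4,2)$ and, after division by $2(p-1)$, the coefficients $(1,2,2,1)/(p-1)$ of the second bracket; its odd-$d$ part merges with the third summand, where the Cauchy product $(1+x^d)^{2(p-1)/d}(1+x^{2d})^{(p-1)/d}$ has coefficient $\sum_{j}\binom{2(p-1)/d}{(k-i-2dj)/d}\binom{(p-1)/d}{j}$ at $x^{k-i}$, and attaching this to $\binom{3}{i}\Phi(d)\binom{4(p-1)/d}{(k-i)/d}$ gives the weights $1$ for $i\in\{0,3\}$ and $3$ for $i\in\{1,2\}$ of the third bracket. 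Keeping the parity split, the four shifts $i=0,1,2,3$, and the nested convolution aligned simultaneously is the only genuinely fiddly bookkeeping; everything else is a direct application of the binomial theorem.
\qed
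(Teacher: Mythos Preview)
Your proposal is correct and follows exactly the paper's approach: reduce to counting $\mathrm{Aut}(T_{4p})$-orbits on $k$-subsets via Lemma~\ref{enlemma4}, apply Lemma~\ref{enlemma10} to the cycle index of Lemma~\ref{enlemma9}, substitute $x_i\mapsto 1+x^i$, and read off the coefficient of $x^k$. In fact you carry out more of the coefficient extraction than the paper does---the paper simply writes down $Q(x)$ and asserts that the coefficient of $x^k$ is ``clearly'' the expression~\eqref{enequation8}---whereas your parity split of the $(1+x)^3$ summand and the $(1,3,3,1)+(1,1,1,1)=(2,4,4,2)$ bookkeeping is exactly what is needed to verify that assertion.
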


\begin{proof}
Clearly, if $k=0$ (respectively, $k=4 p-1$ ), then there exists only one dicirculant digraph $\Cay\left(T_{4 p}, \emptyset\right)$ (respectively, $\left.\Cay\left(T_{4 p}, T_{4 p} \backslash\{e\}\right)\right)$ of out-degree $k$. We just need to consider $1 \leq k \leq 4 p-2$. By Lemma \ref{enlemma9}, the cycle index of $\mathrm{Aut}\left(T_{4 p}\right)$ acting on $A=T_{4 p} \setminus\{e\}$ is
\begin{align*}
\mathcal{I}\left(\mathrm{Aut}\left(T_{4 p}\right), A\right)&=\frac{1}{2p} x_1^{2p-1}\left(x_p^2+x_{2p}-x_1^{2p}-x_2^p\right)
+\frac{1}{2(p-1)} x_1^3 \cdot \sum_{d \mid(p-1)} \Phi(d) x_d^{\frac{4(p-1)}{d}}\\[0.3cm]
&~~+\frac{1}{2(p-1)}x_1x_2\sum_{\substack{d\mid (p-1)\\ d~\text{even}}}\Phi(d)x_d^{\frac{4(p-1)}{d}}+\frac{1}{2(p-1)}x_1x_2\sum_{\substack{d\mid (p-1)\\ d~\text{odd}}}\Phi(d)x_d^{\frac{2(p-1)}{d}}x_{2d}^{\frac{p-1}{d}}.
\end{align*}
Putting $x_i=1+x^i$ in the above equation, we have
\begin{align*}
Q(x)&=\frac{1}{2p} (1+x)^{2p-1}\left[(1+x^p)^2+1+x^{2p}-(1+x)^{2p}-(1+x^2)^p\right]\\[0.3cm]
&~~+\frac{1}{2(p-1)} (1+x)^3 \cdot \sum_{d \mid(p-1)} \Phi(d) (1+x^d)^{\frac{4(p-1)}{d}}\\[0.3cm]
&~~+\frac{1}{2(p-1)}(1+x)(1+x^2)\sum_{\substack{d\mid (p-1)\\ d~\text{even}}}\Phi(d)(1+x^d)^{\frac{4(p-1)}{d}}\\[0.3cm]
&~~+\frac{1}{2(p-1)}(1+x)(1+x^2)\sum_{\substack{d\mid (p-1)\\ d~\text{odd}}}\Phi(d)(1+x^d)^{\frac{2(p-1)}{d}}(1+x^{2d})^{\frac{p-1}{d}}.
\end{align*}
Recall that the two dicirculant digraphs $\Cay\left(T_{4 p}, S\right)$ and $\Cay\left(T_{4 p}, T\right)$ are isomorphic if and only if $S$ and $T$ are $\mathrm{Aut}\left(T_{4 p}\right)$-equivalent. Thus the number of dicirculant digraphs of out-degree $k$ up to isomorphism is equal to the number of $\mathrm{Aut}\left(T_{4 p}\right)$-equivalent $k$-subsets of $A$. By Lemma \ref{enlemma10}, it's equal to the coefficient of $x^k$ in the polynomial $Q(x)$. We let $\binom{n}{j}=0$ if $j<0$ or $j>n$. Then the coefficient of $x^k$ in the polynomial $Q(x)$ is clearly given in (\ref{enequation8}).
\qed\end{proof}

In \cite{Mishna2000}, Mishna calculated the number of the circulant digraphs of order $p$~($p$ prime) and out degree $k$  up to isomorphism. With the similar proof of Section 2.4 in \cite{Mishna2000}, we give the number of circulant digraphs of order $2p$~($p$ prime) and out degree $k~(0\leq k\leq 2p-1)$.
\begin{lemma}\label{enlemma12}
Let $p$ be an odd prime. The number of circulant digraphs of out degree $k$ up to isomorphism is equal to
\begin{equation}\label{enequation9}
\mathcal{M}_{c, k}=\frac{1}{p-1}\left[\sum_{d \mid \gcd(p-1, k)} \Phi(d)\binom{\frac{2(p-1)}{d}}{\frac{k}{d}}+\sum_{d \mid \gcd(p-1, k-1)} \Phi(d)\binom{\frac{2(p-1)}{d}}{\frac{k-1}{d}}\right],
\end{equation}
where $0\leq k\leq 2p-1$ and $\Phi(\cdot)$ is the Euler's totient function.
\end{lemma}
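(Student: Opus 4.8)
The plan is to transplant the argument already carried out for dicirculant digraphs to the cyclic group $\mathbb{Z}_{2p}$, following Mishna's scheme. A circulant digraph of order $2p$ is a Cayley digraph $\operatorname{Cay}(\mathbb{Z}_{2p},S)$ with $S\subseteq \mathbb{Z}_{2p}\setminus\{0\}$, and $\operatorname{Aut}(\mathbb{Z}_{2p})$ is exactly the multiplication action of $\mathbb{Z}_{2p}^*$. Since $\mathbb{Z}_{2p}$ is a DCI-group (the cyclic counterpart of Lemma \ref{enlemma3}, which is the setting of Mishna's enumeration \cite{Mishna2000}), two such digraphs are isomorphic if and only if $S$ and $T$ lie in the same $\mathbb{Z}_{2p}^*$-orbit, exactly as in Lemma \ref{enlemma4}. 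Hence the number of circulant digraphs of out-degree $k$ up to isomorphism equals the number of $\mathbb{Z}_{2p}^*$-equivalent $k$-subsets of $B:=\mathbb{Z}_{2p}\setminus\{0\}$, which by Lemma \ref{enlemma10} is the coefficient of $x^k$ in $P_{\mathbb{Z}_{2p}^*}(1+x,1+x^2,\ldots)$. So the core task is to compute the cycle index $\mathcal{I}(\mathbb{Z}_{2p}^*,B)$.

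First I would determine the cycle type of multiplication by $s\in\mathbb{Z}_{2p}^*$ on $B$. Under the identification $\mathbb{Z}_{2p}\cong\mathbb{Z}_2\times\mathbb{Z}_p$, the nonzero elements split into three $\mathbb{Z}_{2p}^*$-invariant classes: the single element $p$ (fixed by every odd $s$), the $p-1$ nonzero even elements $\{2,4,\ldots,2p-2\}$, and the $p-1$ units. On each of the latter two classes the action is equivalent to the multiplication of $\mathbb{Z}_p^*$ on $\mathbb{Z}_p\setminus\{0\}$; writing $\mathbb{Z}_{2p}^*=\langle z\rangle$ (Lemma \ref{enlemma6}) and $s=z^{i_s}$ with $o(s)=(p-1)/\gcd(i_s,p-1)$, each class therefore splits into $\gcd(i_s,p-1)$ cycles of length $o(s)$, with no further fixed points once $s\neq 1$. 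Thus $s$ contributes $x_1\,x_{o(s)}^{\,2\gcd(i_s,p-1)}$, and grouping the $\Phi(d)$ elements of each order $d\mid(p-1)$ (the $s=1$ term being absorbed as the $d=1$ summand) yields
$$\mathcal{I}(\mathbb{Z}_{2p}^*,B)=\frac{x_1}{p-1}\sum_{d\mid(p-1)}\Phi(d)\,x_d^{\frac{2(p-1)}{d}}.$$

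Finally I would substitute $x_i=1+x^i$ and extract the coefficient of $x^k$. Writing the resulting polynomial as $\tfrac{1+x}{p-1}\sum_{d\mid(p-1)}\Phi(d)(1+x^d)^{2(p-1)/d}$, the factor $(1+x)$ coming from the fixed point $p$ splits the extraction into two pieces: the coefficient of $x^k$ in $(1+x^d)^{2(p-1)/d}$, namely $\binom{2(p-1)/d}{k/d}$ when $d\mid k$, and the coefficient of $x^{k-1}$, namely $\binom{2(p-1)/d}{(k-1)/d}$ when $d\mid(k-1)$. Combining $d\mid(p-1)$ with $d\mid k$ into $d\mid\gcd(p-1,k)$ (and likewise with $k-1$) gives exactly (\ref{enequation9}).

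The main obstacle is the cycle-structure analysis of the middle step: one must verify that the nonzero even elements and the units \emph{each} reproduce the $\mathbb{Z}_p^*$-action and hence each contribute $\gcd(i_s,p-1)$ cycles of length $o(s)$ — this doubling is what produces the exponent $\tfrac{2(p-1)}{d}$ — while $p$ alone remains fixed, yielding the solitary factor $x_1$ whose substitution $(1+x)$ is precisely what generates the two-summand shape of the final formula. Once the cycle index is secured, the rest is the routine coefficient extraction furnished by Lemma \ref{enlemma10}.
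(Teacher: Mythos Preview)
Your proposal is correct and follows precisely the approach the paper intends: the paper does not actually prove this lemma but simply states that it follows ``with the similar proof of Section 2.4 in \cite{Mishna2000}'', i.e.\ P{\'o}lya enumeration applied to the action of $\mathrm{Aut}(\mathbb{Z}_{2p})=\mathbb{Z}_{2p}^{*}$ on $\mathbb{Z}_{2p}\setminus\{0\}$, which is exactly what you carry out. Your cycle-index computation $\mathcal{I}(\mathbb{Z}_{2p}^{*},B)=\tfrac{x_1}{p-1}\sum_{d\mid(p-1)}\Phi(d)\,x_d^{2(p-1)/d}$ and subsequent coefficient extraction reproduce (\ref{enequation9}) verbatim.
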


Let $\Cay\left(T_{4 p}, S\right)$ be a dicirculant digraph with $|S|=k$. If $k=0$ or $1$ , then $\Cay\left(T_{4 p}, S\right)$ is obviously disconnected. If $k=2$, then $\Cay\left(T_{4 p}, S\right)$ is disconnected if and only if
\begin{align*}
S \subseteq \langle a\rangle \backslash\{e\} ~~\text{or}~~ S=\left\{a^j b, a^{p+j}b\right\}~~\text{or}~~ S=\left\{a^p, a^j b\right\}, ~~\text{for}~~ j \in \mathbb{Z}_{2p}.
\end{align*}
If $k=3$, then $\Cay\left(T_{4 p}, S\right)$ is disconnected if and only if
\begin{align*}
S \subseteq \langle a\rangle \backslash\{e\} ~~\text{or}~ S=\left\{a^p, a^j b, a^{p+j}b\right\} ~~\text{for}~~ j \in \mathbb{Z}_{2p}.
\end{align*}
For $4 \leq k \leq 2p-1$, $\Cay\left(T_{4 p}, S\right)$ is disconnected if and only if $S \subseteq \langle a\rangle \backslash\{e\}$, and for $2p \leq k \leq 4 p-1, \Cay\left(T_{4 p}, S\right)$ must be connected. By Theorem \ref{entheorem2} and Lemma \ref{enlemma11}, we get the number of connected dicirculant digraphs of order $4p$ and out-degree $k$.

\begin{theorem}
Let $p$ be an odd prime. The number of connected dicirculant digraphs of order $4p$ and out-degree $k$ up to isomorphism is
\begin{equation*}
\mathcal{M'}_k=\left\{
\begin{array}{ll}
0,                                         & \text { if~}~k=0,1, \\[0.2cm]
\mathcal{M}_2-\mathcal{M}_{c, 2}-2,        & \text { if }~ k=2,   \\[0.2cm]
\mathcal{M}_3-\mathcal{M}_{c, 3}-1,        & \text { if }~ k=3,   \\[0.2cm]
\mathcal{M}_k-\mathcal{M}_{c, k},          & \text { if }~ 4 \leq k \leq 2p-1, \\[0.2cm]
\mathcal{M}_k,                             & \text { if }~ 2p \leq k \leq 4 p-1.
\end{array}\right.
\end{equation*}
where $\mathcal{M}_k$ and $\mathcal{M}_{c, k}$ are shown in (\ref{enequation8}) and (\ref{enequation9}), respectively.
\end{theorem}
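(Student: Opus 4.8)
The plan is to obtain $\mathcal{M}'_k$ by subtracting from the total count $\mathcal{M}_k$ of Theorem~\ref{entheorem2} the number of \emph{disconnected} dicirculant digraphs of out-degree $k$, counted up to isomorphism. Recall that $\Cay(T_{4p},S)$ is connected if and only if $\langle S\rangle=T_{4p}$, so I first need the classification of those $S\subseteq A$ with $\langle S\rangle\neq T_{4p}$. This happens precisely when $S\subseteq\langle a\rangle\setminus\{e\}$, or $S$ is one of the sporadic families $\{a^jb\}$, $\{a^jb,a^{p+j}b\}$, $\{a^p,a^jb\}$, $\{a^p,a^jb,a^{p+j}b\}$. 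The point is that each $b$-type element squares to the central involution $a^p$ and generates only the order-$4$ subgroup $\langle a^jb\rangle=\{e,a^jb,a^p,a^{p+j}b\}$; to generate $T_{4p}$ one must in addition generate $\langle a\rangle$, which forces $S$ to contain either some $a^i$ with $i\notin\{0,p\}$ or two $b$-type elements lying in distinct such order-$4$ subgroups. Organizing these families by $k=|S|$ yields exactly the five ranges in the statement.

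The main enumeration input is that the disconnected digraphs with $S\subseteq\langle a\rangle\setminus\{e\}$ are in bijection, up to isomorphism, with the circulant digraphs of order $2p$ and out-degree $k$, hence counted by $\mathcal{M}_{c,k}$ of Lemma~\ref{enlemma12}. To see this I would use that every $\alpha_{s,t}\in\mathrm{Aut}(T_{4p})$ fixes $A_1=\langle a\rangle\setminus\{e\}$ setwise and acts there by $a^i\mapsto a^{si}$ with $s\in\mathbb{Z}_{2p}^*$; this is exactly the multiplier action of $\mathrm{Aut}(\mathbb{Z}_{2p})=\mathbb{Z}_{2p}^*$ defining isomorphism of order-$2p$ circulants. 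Consequently two subsets of $A_1$ are $\mathrm{Aut}(T_{4p})$-equivalent if and only if the corresponding connection sets in $\mathbb{Z}_{2p}\setminus\{0\}$ are multiplier-equivalent, so the two counts of isomorphism classes coincide.

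It remains to count the sporadic classes and confirm they are disjoint from the $S\subseteq A_1$ classes. Since every automorphism preserves both $A_1$ and $A_2=\langle a\rangle b$ setwise, any $S$ meeting $A_2$ cannot be equivalent to one contained in $A_1$, so no overcounting occurs. For the sporadic families I would invoke the isomorphisms noted before the statement: because $\alpha_{1,j}(b)=a^jb$ and $\alpha_{1,j}(a^pb)=a^{p+j}b$, translation by $\alpha_{1,j}$ collapses each family to a single isomorphism class. Thus for $k=2$ the families $\{a^jb,a^{p+j}b\}$ and $\{a^p,a^jb\}$ contribute one class each, and they are mutually inequivalent (having $0$ resp.\ $1$ element in $A_1$), giving $\mathcal{M}'_2=\mathcal{M}_2-\mathcal{M}_{c,2}-2$; for $k=3$ the single family $\{a^p,a^jb,a^{p+j}b\}$ contributes one class, giving $\mathcal{M}'_3=\mathcal{M}_3-\mathcal{M}_{c,3}-1$. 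For $4\le k\le 2p-1$ only the case $S\subseteq A_1$ survives, yielding $\mathcal{M}_k-\mathcal{M}_{c,k}$, while for $2p\le k\le 4p-1$ even $S\subseteq A_1$ is impossible since $|A_1|=2p-1<k$, so every such digraph is connected and $\mathcal{M}'_k=\mathcal{M}_k$. The cases $k=0,1$ are disconnected outright, giving $0$.

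The step I expect to be the genuine obstacle is the bijection of the second paragraph: one must verify not merely the set-level correspondence but that the two isomorphism relations match exactly, i.e.\ that the restriction of $\mathrm{Aut}(T_{4p})$ to $A_1$ surjects onto the full multiplier group $\mathbb{Z}_{2p}^*$ and introduces no extra identifications beyond those already present for order-$2p$ circulants (here the fact that $\mathbb{Z}_{2p}$ is a CI-group is implicitly used through Lemma~\ref{enlemma12}). Everything else reduces to the bookkeeping of which sporadic families occur for each $k$, together with the $A_1/A_2$-invariance that keeps the families separate and prevents double counting.
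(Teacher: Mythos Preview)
Your proposal is correct and follows essentially the same approach as the paper: classify the disconnected connection sets by size, observe that the $S\subseteq A_1$ classes are counted by $\mathcal{M}_{c,k}$, and collapse each sporadic family to a single isomorphism class via the translations $\alpha_{1,j}$. You are in fact more careful than the paper in justifying the bijection with order-$2p$ circulant classes (the paper simply cites Theorem~\ref{entheorem2} and the circulant lemma without explaining why the $\mathrm{Aut}(T_{4p})$-orbits on $A_1$ coincide with the $\mathbb{Z}_{2p}^*$-orbits) and in verifying that the sporadic classes for $k=2$ are mutually inequivalent and disjoint from the $A_1$ classes.
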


\begin{theorem}
There are $36$ dicirculant digraphs of order $8$ up to isomorphism, in which $26$ are connected.
\end{theorem}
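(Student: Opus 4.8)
The plan is to handle this as the special case $p=2$, for which $T_{8}=\langle a,b\mid a^{4}=1,\ a^{2}=b^{2},\ b^{-1}ab=a^{-1}\rangle$ is the quaternion group. The first thing to emphasize is that the closed formulas of Lemma~\ref{enlemma9} and Theorem~\ref{entheorem1} may \emph{not} be substituted at $p=2$: their derivation relies on Lemma~\ref{enlemma13} and on the splitting of $\mathbb{Z}_{2p}$ into nonzero even residues and odd residues, both of which genuinely require $p$ to be odd. Consequently I would recompute the cycle index of $\mathrm{Aut}(T_{8})$ acting on $A=T_{8}\setminus\{e\}$ directly.

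By Lemma~\ref{enlemma5} with $p=2$ we have $\mathbb{Z}_{4}^{*}=\{1,3\}$ and $\mathbb{Z}_{4}=\{0,1,2,3\}$, so $\mathrm{Aut}(T_{8})=\{\alpha_{s,t}:s\in\{1,3\},\ t\in\mathbb{Z}_{4}\}$ has order $8$. Writing $A_{1}=\{a,a^{2},a^{3}\}$ and $A_{2}=\{b,ab,a^{2}b,a^{3}b\}$, each $\alpha_{s,t}$ preserves $A_{1}$ and $A_{2}$, so I would compute its cycle type on $A=A_{1}\cup A_{2}$ one element at a time, exactly in the spirit of the case analysis in the proof of Lemma~\ref{enlemma7} but with the arithmetic carried out modulo $4$. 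A short by-hand check produces the cycle types $1^{7}$ for $\alpha_{1,0}$; $1^{3}4^{1}$ for $\alpha_{1,1}$ and $\alpha_{1,3}$; $1^{3}2^{2}$ for $\alpha_{1,2},\alpha_{3,0},\alpha_{3,2}$; and $1^{1}2^{3}$ for $\alpha_{3,1},\alpha_{3,3}$. Assembling these according to \eqref{enequation1} gives
\[
\mathcal{I}\big(\mathrm{Aut}(T_{8}),A\big)=\frac{1}{8}\left(x_{1}^{7}+3x_{1}^{3}x_{2}^{2}+2x_{1}^{3}x_{4}+2x_{1}x_{2}^{3}\right).
\]

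Applying the P\'olya Enumeration Theorem (Lemma~\ref{enlemma8}) with $m=|C|=2$, i.e. setting every $x_{i}=2$, yields the total number of dicirculant digraphs of order $8$ as $\frac{1}{8}\left(2^{7}+3\cdot2^{5}+2\cdot2^{4}+2\cdot2^{4}\right)=\frac{1}{8}(128+96+32+32)=36$. For the connected count I would subtract the disconnected isomorphism classes. The maximal subgroups of $T_{8}$ are precisely the three cyclic subgroups $\langle a\rangle,\langle b\rangle,\langle ab\rangle$ of order $4$, so $\Cay(T_{8},S)$ is disconnected if and only if $S\subseteq H\setminus\{e\}$ for one of these three subgroups $H$. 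I would then count the $\mathrm{Aut}(T_{8})$-orbits among such subsets: using that each $\alpha_{s,t}$ fixes $\langle a\rangle$ setwise while $\alpha_{1,1}$ interchanges $\langle b\rangle$ and $\langle ab\rangle$, one finds exactly $10$ orbits (the empty set, three of one-element type, four of two-element type, and two of three-element type). Hence the number of connected dicirculant digraphs of order $8$ is $36-10=26$.

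The main obstacle is bookkeeping rather than theory: because the generic $p$-odd formulas break down at $p=2$, every one of the eight cycle types must be verified individually, and the disconnected orbits must be tallied by hand. Unlike the fully symmetric generic situation, the three maximal subgroups here split into two $\mathrm{Aut}(T_{8})$-orbits ($\langle a\rangle$ on its own, and $\{\langle b\rangle,\langle ab\rangle\}$ together), so the ten disconnected classes do not collapse out of a single symmetry argument and have to be listed explicitly. Confirming that these two independent hand computations land exactly on $36$ and $26$ is the crux of the proof.
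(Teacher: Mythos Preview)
Your computation is correct and reaches the paper's numbers, but by a different route. The paper does not compute a cycle index for $p=2$ at all: it simply lists, by hand, all $36$ representatives of $\mathrm{Aut}(T_8)$-equivalence classes of subsets of $A=T_8\setminus\{e\}$, and then picks out the ten sets $S$ in that list with $\langle S\rangle\ne T_8$. Your approach---working out the cycle types of the eight $\alpha_{s,t}$, assembling the cycle index, applying Lemma~\ref{enlemma8}, and then counting disconnected orbits via the three maximal subgroups $\langle a\rangle,\langle b\rangle,\langle ab\rangle$---is closer in spirit to the paper's treatment of odd $p$ and avoids an error-prone explicit enumeration of $36$ sets; the paper's direct listing, by contrast, makes the ten disconnected classes visible at a glance without any further orbit argument. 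Your cycle-type table and the breakdown $1+3+4+2=10$ of disconnected orbits are both correct.

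One caveat worth flagging: you invoke Lemma~\ref{enlemma5} at $p=2$ to conclude $|\mathrm{Aut}(T_8)|=8$, but $T_8$ is the quaternion group $Q_8$, whose automorphism group is isomorphic to $S_4$ and has order $24$; the eight maps $\alpha_{s,t}$ are only the stabiliser of $\langle a\rangle$ inside $\mathrm{Aut}(Q_8)$. The paper's own list of $36$ classes tacitly uses this same order-$8$ subgroup (for instance $\{a\}$ and $\{b\}$ are listed separately, although an automorphism of $Q_8$ carries $a$ to $b$), so your count matches the paper's intended statement. Just be aware that the appeal to Lemma~\ref{enlemma5} is not literally valid at $p=2$, and that both your argument and the paper's are, strictly speaking, counting orbits under a proper subgroup of $\mathrm{Aut}(T_8)$.
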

\begin{proof}
Note that $T_8=\langle a, b\mid a^4=1, a^2=b^2, b^{-1}ab=a^{-1}\rangle=\left\{a^i, a^j b \mid 0 \leq i, j \leq 3\right\}$ is the quaternion group. Let $A=T_8 \backslash\{1\}$. Then all the representative elements of $\operatorname{Aut}\left(T_8\right)$-equivalent classes of subsets of $A$ are as follows:
\begin{align*}
&\emptyset, \{a\}, \{a^2\}, \{b\}, \{a,a^2\}, \{a,a^3\}, \{a,b\},
\{a^2,b\}, \{b,ab\},\{b,a^2b\}, \{a,a^2,a^3\},\{a,a^2,b\},\\&\{a,a^3,b\}, \{a,b,ab\}, \{a^2,b,ab\}, \{b,ab,a^2b\}, \{a,b,a^2b\}, \{a^2,b,a^2b\},
\{a,a^2,a^3,b\},\{a,a^2,b,ab\},\\&\{a,a^2,b,a^2b\},
\{a,a^3,b,ab\}, \{a,a^3,b,a^2b\}, \{a,b,ab,a^2b\},\{b,ab,a^2b,a^3b\},\{a^2,b,ab,a^2b\}\\
&\{a,a^2,a^3,b,ab\},
\{a,a^2,a^3,b,a^2b\},\{a,a^2,b,ab,a^2b\},
\{a,a^3,b,ab,a^2b\},\{a,b,ab,a^2b,a^3b\},\\&\{a^2,b,ab,a^2b,a^3b\},
\{a,a^2,a^3,b,ab,a^2b\},\{a,a^2,b,ab,a^2b,a^3b\},
\{a,a^3,b,ab,a^2b,a^3b\},\\
&\{a,a^2,a^3,b,ab,a^2b,a^3b\}.
\end{align*}
And $\Cay(T_8,S)$ is disconnected if $S$ is one of the following:
\begin{align*}
&\emptyset, \{a\}, \{a^2\}, \{b\}, \{a,a^2\}, \{a,a^3\}, \{a^2,b\},\{b,a^2b\}, \{a,a^2,a^3\}, \{a^2,b,a^2b\}.
\end{align*}
Thus, there are exactly $36$ dicirculant digraphs of order $8$ up to isomorphism, in which $26$ are connected.
\qed\end{proof}

In Table \ref{en-table1}, we list the number of connected dicirculant digraphs of order $4p$ and out-degree $k$ up to isomorphism when $0\leq k\leq 4p-1$ and $2\leq p\leq11$.

\begin{table}[h]
\begin{center}
\caption{ The number of connected dicirculant digraphs~$(2 \leq p \leq 11)$}\label{en-table1}
\begin{tabular}{c|>{\centering\arraybackslash}m{12cm}|c}
\hline
$p$    & $\left(\mathcal{M'}_2, \ldots, \mathcal{M'}_{4 p-1}\right)$          &$\mathcal{N'}$\\
\hline
2   &(2,6,8,6,3,1)   &26\\
\hline
3   & (4, 17,38,53,54,41,24,12,4,1)   &248\\
\hline
5  & (4,26,109,318,734,1341,2005,2447,2448,2008,1351,756,352,143,
49,16,4,1)
&14112\\
\hline
7    & (4,36,223,999,3645,10832,26942,56604,101661,156837,208957,
241024,241025,208960,156851,101711,56727,27159,11124,3937,
1216,346,87,20,4,1)
&1616932\\
\hline
11  & (4,54,563,4391,27961,147551,663267,2574938,8744601,26208517,
69845535,166478474,356632986,689343259,1206179201,1915502418,
2766650996,3640164264,4368079300,4784024372,4784024373,
4368079303,3640164286,2766651130,1915503021,1206181242,
689348699,356644626,166498844,69874949,26243817,8779901,
2604352,683637,159191,33401,6432,1166,189,28,4,1)
&40002755244\\
\hline
\end{tabular}
\end{center}
\end{table}

\section{Conclusion}
In this paper, we calculate the number of dicirculant digraphs by employing the P{\'o}lya Enumeration Theorem, in which the number of connected dicirculant digraphs is counted by deleting the  number of circulants and other disconnected graphs. We also get the number of (connected) dicirculant digraphs $\Cay(T_{4p}, S)$ of out-degree $k$ for all $0\leq k\leq 4p-1$. Finally, we list the number of (connected) dicirculant digraphs $\Cay(T_{4p}, S)$ of out-degree $k~(0\leq k\leq 4p-1)$ for $2\leq p\leq 11$. We have known that DCI-group is an important property in enumerating Cayley graphs. So we would like to propose the following problem:
\begin{problem}
Characterize other DCI-groups such as generalized dicyclic groups, generalized dihedral groups and semi-dihedral groups. Then enumerate the Cayley digraphs on these groups.
\end{problem}

\section*{Declaration of competing interest}
The authors declare that they have no conflict of interest.
\section*{Data availability}
No data was used for the research described in the article.


\begin{thebibliography}{99}
\setlength{\parskip}{2pt}

\small{
\bibitem{Adam1967}
A. {\'A}d{\'a}m, Research problem 2--10, J. Combin. Theory 2 (1967) 309.

\bibitem{AlspachM2002}
B. Alspach,  M. Mishna, Enumeration of Cayley graphs and digraphs, Discrete Math. 256 (2002) 527--539.

\bibitem{Babai1977}
L. Babai, Isomorphism problem for a class of point-symmetric structures, Acta Math.
Acad. Sci. Hungar. 29 (1977) 329--336.

\bibitem{Dobson1995}
E. Dobson, Isomorphism problem for Cayley graph of $\mathbb{Z}_p^3$, Discrete Math. 147 (1995) 87--94.

\bibitem{Dobson1998}
E. Dobson, Isomorphism problem for metacirculant graphs of order a product of
distinct primes, Canad. J. Math. 50 (1998) 1176--1188.

\bibitem{Dobson2014}
E. Dobson, On the Cayley isomorphism problem for Cayley objects of nilpotent
groups of some orders, Electronic J. Combin. 21 (3) (2014) \#P3.8.

\bibitem{DobsonMS2015}
E. Dobson, J. Morris, P. Spiga, Further restrictions on the structure of finite
DCI-groups: An addendum, J. Algebra Combin. 42 (2015) 959--969.

\bibitem{ElspasT1970}
B. Elspas, J. Turner, Graphs with circulant adjacency matrices, J. Combin. Theory 9 (1970) 297--307.

\bibitem{FengK2018}
Y.Q. Feng, I. Kov{\'a}cs, Elementary abelian groups of rank $5$ are DCI-groups, J. Combin. Theory Ser. A 157 (2018) 162--204.

\bibitem{HararyP1973}
F. Harary, E. M. Palmer, Graphical Enumeration, Academic Press, New York, 1973.


\bibitem{HuangQX2003}
Q. Huang, A classification of circulant DCI (CI)-digraphs of 2-power order,
Discrete Math. 265 (2003) 71--84.


\bibitem{HuangM2000}
Q. Huang, J. Meng, A classification of DCI (CI)--subsets for cyclic group
of odd prime power order, J. Combin. Theory Ser. B 78 (2000) 24--34.

\bibitem{HuangH2019}
X. Huang, Q. Huang, Enumerating Cayley (di-)graphs on dihedral groups, J. Algebra Appl. 18 (4) (2019) 1950075.

\bibitem{HuangHL2017}
X. Huang, Q. Huang, L. Lu, Enumeration of cubic Cayley graphs on dihedral groups, Acta Math. Sin. (Engl. Ser.) 33 (7) (2017) 996--1010.

\bibitem{Kohl2007}
T. Kohl, Groups of order $4p$, twisted wreath products and Hopf-Galois theory, J. Algebra 314 (2007) 42--74.



\bibitem{KovacsR2022}
I. Kov{\'a}cs, G. Ryabov, The group $C_p^4\times C_q$ is a DCI-group, Discrete
Math. 345 (2022) 112705.

\bibitem{LiLuP2007}
C. Li, Z. Lu, P. P. P{\'a}lfy, Further restrictions on the structure of finite
CI-groups, J. Algebraic Combin. 26 (2007) 161--181.

\bibitem{LiSim2001}
C. Li, H.S. Sim, On half-transitive metacirculant graphs of prime-power order, J. Combin. Theory Ser. B 18 (2001) 45--47.

\bibitem{LiskovetsP2000}
V. Liskovets, R. P\"{o}schel, Counting circulant graphs of prime-power order by decomposing into orbit enumeration problems, Discrete Math. 214(1--3) (2000) 173--191.

\bibitem{Mishna2000}
M. Mishna, Cayley graph enumeration, M.Sc. Thesis, Simon Fraser University, 2000.

\bibitem{Morris1999}
J. Morris, Isomorphisms of Cayley Graphs, Ph.D. Thesis, Simon Fraser University, 1999.

\bibitem{Muzychuk1995}
M. Muzychuk, {\'A}d{\'a}m's conjecture is true in the square-free case, J. Combin. Theory
Ser. A 72 (1995) 118--134.

\bibitem{Muzychuk1997}
M. Muzychuk, On {\'A}d{\'a}m's conjecture for circulant graphs, Discrete Math. 167--168
(1997) 497--510.

\bibitem{Muzychukppt}
M. Muzychuk, Generalised dihedral CI-groups, slides of the talk given in The 5th Workshop on Algebraic Graph Theory and its Applications, \url{https://drive.google.com/file/d/1wvXeNWwCBKC4zz2ZH7NTs4pcNhLc3jwB/view}.

\bibitem{Shanks1993}
D. Shanks, Solved and unsolved problems in number theory, Chelsea, New York, 1993.

\bibitem{Somlai2015}
G. Somlai, The Cayley isomorphism property for groups of order $8p$, Ars Math.
Contemp. 8 (2015) 433--444.

\bibitem{Turner1967}
J. Turner, Point--symmetric graphs with a prime number of points, J. Combinatorial Theory 3 (1967) 136--145.

\bibitem{Wang1994}
R. Wang, Half-transitive graphs of order a product of two distinct primes, Commun. Algebra 22 (1994) 915--927.

\bibitem{WangLIU2021}
Y. Wang, W. Liu, L. Feng, The number of connected Cayley graphs over dicyclic group, Mathematical Theory and Applications 41 (2) (2021) 64--75.

\bibitem{XieFX24}
J.H. Xie, Y.Q. Feng, B. Xia, Generalized quaternion groups with the $m$-DCI property, arxiv: 2306.16677.

\bibitem{Xu1992}
M. Xu, Half-transitive graphs of prime-cube order, J. Algebraic Combin. 1 (1992) 275--282.
}

\end{thebibliography}
\end{document}